\theoremstyle{plain}
\newtheorem{lemma}{Lemma}[section]
\newtheorem{theorem}[lemma]{Theorem}
\newtheorem{proposition}[lemma]{Proposition}
\newtheorem{remark}[lemma]{\normalfont \scshape Remark}
\newtheorem{definition}[lemma]{Definition}
\newcommand{\R}{\mathbb{R}}
\newcommand{\N}{\mathbb{N}}
\newcommand{\norm}[1]{\left\Vert#1\right\Vert}
\newcommand{\abs}[1]{\left\vert#1\right\vert}
\newcommand{\set}[1]{\left\{#1\right\}}
\newcommand{\eps}{\varepsilon}
\newcommand{\bfU}{\bm{U}}
\newcommand{\bfV}{\bm{V}}
\newcommand{\bfX}{\bm{X}}
\newcommand{\bfZ}{\bm{Z}}
\newcommand{\bfeta}{\bm{\eta}}
\newcommand{\bfxi}{\bm{\xi}}
\newcommand{\barE}{\bar E^-[0,1]}
    \newcommand{\unit}{\@ifstar{\unit@Star}{\unit@NoStar}}
        \newcommand{\unit@Star}{(0,1)}
        \newcommand{\unit@NoStar}{[0,1]}
    \newcommand{\tab}{\@ifstar{\tab@Star}{\tab@NoStar}}
        \newcommand{\tab@Star}[1]{{}\mathrel{\phantom{#1}}}
        \newcommand{\tab@NoStar}[1]{\phantom{{}#1}}
\newcommand{\PinfZdz}{\,\left(P*\inf_{0\le t\le 1}Z_t\right)(dz)}
\begin{document}

\title[Testing for a Generalized Pareto Process]{Testing for a Generalized Pareto Process}%
\author{Stefan Aulbach and Michael Falk}
\address{$^1$University of W\"{u}rzburg,
Institute of Mathematics,  Emil-Fischer-Str. 30, 97074 W\"{u}rzburg, Germany.}
\email{stefan.aulbach@uni-wuerzburg.de, falk@mathematik.uni-wuerzburg.de}

\subjclass[2010]{Primary 62F05, secondary 60G70, 62G32} \keywords{Functional extreme
value theory, generalized Pareto process, point process of exceedances, local asymptotic
normality, central sequence, asymptotic optimal test sequence,  asymptotic relative
efficiency}

\begin{abstract}
We investigate two models for the following setup: We consider a stochastic process
$\bfX\in C[0,1]$ whose distribution belongs to a parametric family indexed by
$\vartheta\in\Theta\subset \R$. In case $\vartheta=0$,  $\bfX$ is a generalized Pareto
process. Based on $n$ independent copies $\bfX^{(1)},\dots,\bfX^{(n)}$ of $\bfX$, we
establish local asymptotic normality (LAN) of the point process of exceedances among
$\bfX^{(1)},\dots,\bfX^{(n)}$ above an increasing threshold line in each model.

The corresponding central sequences provide asymptotically optimal sequences of tests for
testing $H_0: \vartheta=0$ against a sequence of alternatives $H_n:
\vartheta=\vartheta_n$ converging to zero as $n$ increases. In one model, with an
underlying exponential family, the central sequence is provided by the number of
exceedances only, whereas in the other one the exceedances themselves contribute, too.
However it turns out that, in both cases, the test statistics also depend on some additional
and usually unknown model parameters.

We, therefore, consider an omnibus test statistic sequence as well and compute its
asymptotic relative efficiency with respect to the optimal test sequence.
\end{abstract}

\maketitle
% ----------------------------------------------------------------

\section{Introduction}

In the recent three decades, the focus of univariate extreme value theory shifted from the
investigation of maxima (minima) in a sample to the investigation of exceedances above a
high threshold. This approach towards large observations eased accessing the field of
extreme value theory and became a crucial tool for various applied disciplines, such as
building dykes.

Since the publications of the articles by \citet{balh74} and \citet{pick75} it is known that
exceedances above a high threshold can reasonably be modeled only by (univariate)
generalized Pareto distributions (GPD), resulting in the peaks-over-threshold approach
(POT).

Due to practical necessity, the focus of extreme value theory moved in recent years to
multivariate observations as well. Accordingly, the investigation of multivariate exceedances
enforced the definition and investigation of multivariate GPD. This investigation is still lively
continuing as even the definition of multivariate GPD is under debate; see, for instance,
\citet[Chapter 5]{fahure10}.

As already mentioned by \citet[p.\,293]{dehaf06}: \textit{Infinite-di\-men\-sional extreme
value theory is not just a theoretical extension of multivariate extreme value theory to a
more abstract context. It serves to solve concrete problems as well.} Such concrete
problems are, e.g., observing dykes and tides along their whole width and not only at a finite
set of observation points. There is, consequently, the need for a POT approach for
functional data and for generalized Pareto \emph{processes} as well. Again, the data
exceeding some kind of a high threshold are modeled by a functional counterpart of a GPD;
see \citet{aulfaho11b}. The current paper deals with optimal tests that check for particular models whether
those exceedances do, in fact, arise from such a corresponding process.

\subsection{Previous work}

Following \citet{buihz08}, a standard generalized Pareto process, i.e., a generalized Pareto
process with ultimately uniform tails in the margins, is defined as follows. For convenience,
we use bold font such as $\bfV$ for stochastic processes and default font such as $f$ for
non stochastic functions. All operations on functions such as $f\le 0$ are meant pointwise.

\begin{definition} \label{defn:GPP_and_generator}
Let $U$ be an on $(0,1)$ uniformly distributed random variable (rv) and let
$\bfZ=(Z_t)_{t\in[0,1]}\in C[0,1]$ be a stochastic process on the interval $[0,1]$ having
continuous sample paths. We require that $U$ and $\bfZ$ are independent and choose an
arbitrary constant $M<0$. Then
\begin{equation}\label{eqn:definition_of_gpp}
\bfV:=(V_t)_{t\in[0,1]}:=\left(\max\left(-\frac U{Z_t},M\right)\right)_{t\in[0,1]}.
\end{equation}
defines a \emph{standard generalized Pareto process} (GPP) if $0\le Z_t\le m$, $E(Z_t)=1$,
$t\in[0,1]$, hold for some constant $m\ge 1$. A stochastic process $\bfZ\in C[0,1]$ with
these two properties will be called a \emph{generator}.
\end{definition}
The constant $M$ is incorporated in the above definition to ensure that $V_t>-\infty$ for
each $t\in[0,1]$. Note that the finite-dimensional marginal distributions of $\bfV$ provide
multivariate GPD with ultimately uniform tails; see, e.g., \citet{aulbf11}.

The process $\bfV$ is characterized by the fact that its \emph{functional} distribution function (df) is given by
\begin{equation*}
  P(\bfV\le f)
  = 1 - E\left(\sup_{t\in[0,1]}(\abs{f(t)}Z_t)\right),
  \quad f\in \bar E^-[0,1],\,\norm f_\infty\le x_0,
\end{equation*}
for some $x_0>0$. We set $\bar E^-[0,1]:=\{f\in E[0,1]:\,f\le 0\}$ where $E[0,1]$ denotes
the set of all bounded functions $f:[0,1]\to\R$ that have only a finite number of
discontinuities. Then
\begin{equation*}
\norm f_D:=E\left(\sup_{t\in[0,1]}(\abs{f(t)}Z_t)\right),\qquad f\in E[0,1],
\end{equation*}
defines a \emph{$D$-norm} on $E[0,1]$ with generator $\bfZ$; see \citet{aulfaho11}.
This representation of the df of $\bfV$ in terms of a $D$-norm is in complete analogy with
the multivariate case of a GPD. We refer to \citet[Section 5.1]{fahure10}.

It was established by \citet{aulfaho11} that a stochastic process $\bfX\in C[0,1]$ is in the
\emph{functional domain of attraction} of a max-stable process (MSP) $\bfxi$, denoted by
$\bfX\in\mathcal D(\bfxi)$, if and only if this is true for the univariate margins together with
convergence of the corresponding copula processes. A stochastic process
$\bfU=(U_t)_{t\in[0,1]}\in C[0,1]$ is called \emph{copula process} if $U_t$ is for each
$t\in[0,1]$ uniformly distributed on $(0,1)$.

For each standard GPP there is a corresponding \emph{standard} MSP, i.e., a stochastic
process $\bfeta=(\eta_t)_{t\in[0,1]}\in C[0,1]$ such that
\begin{equation} \label{eqn:distribution_function_of_standard_evp}
P(\bfeta\le f)=\exp\left(-\norm f_D\right),\qquad f\in \barE.
\end{equation}
Note that this implies $P(\eta_t\le x)=\exp(x)$, $x\le 0$, $t\in\unit$. On the other hand, the
df of each max-stable process $\bfeta$ having standard negative exponential margins has a
representation as in Equation~\eqref{eqn:distribution_function_of_standard_evp}; we refer
to \citet{aulfaho11} for details.

\subsection{Overview of the current paper}

We replace the rv $U$ in Equation~\eqref{eqn:definition_of_gpp} by a rv $W\ge 0$ which is
independent of $\bfZ$, too. However, the distribution of $W$ is different from the uniform
one and, thus, the process
\begin{equation} \label{eqn:definition_of_delta_neighborhood}
  \bfX:=(X_t)_{t\in[0,1]}:=\left(\max\left(-\frac W{Z_t}, M\right)\right)_{t\in[0,1]}
\end{equation}
is no longer a standard GPP.

This gives rise to the following problem: Based on the exceedances in a sample of $n$
independent copies $\bfX^{(1)},\dots,\bfX^{(n)}$ of $\bfX$ above a high threshold line,
how close can the df of $W$ get to that of $U$ with the difference still being detected? As
we consider exceedances above a high threshold, only the lower end of the df of $W$
matters. In other words, the problem suggests itself to define parametric models
$\set{H_\vartheta:\,\vartheta\in\Theta}$ for the df $H_\vartheta$ of $W$, such that we
can derive optimal tests detecting the deviation of the distribution of the upper tail of
$\bfX$ from that of $\bfV$, i.e., the deviation of $\vartheta$ from zero. This is the content
of the present paper, which is organized as follows.

In Section \ref{sec:delta-neighborhoods} we require that the df $H_\vartheta$ of $W$ has
a density $h_\vartheta$ near zero, which satisfies for some $\delta>0$ the expansion
\begin{equation} \label{eqn:first_expansion_of_density_of_H}
h_\vartheta(u)=1+\vartheta u^{\delta} + o(u^\delta) \qquad \mbox{as }u\downarrow 0
\end{equation}
with some parameter $\vartheta\in\Theta$, where zero is an inner
point of $\Theta\subset\R$. The standard exponential df, for
instance, satisfies this condition with $\delta=1$ and
$\vartheta=-1$. The null-hypothesis $\vartheta=0$ is meant to be
the uniform distribution on $(0,1)$.

In Section \ref{sec:exponential_family} we assume that the distribution of $W$ belongs to
an exponential family given by the probability densities on the interval
\begin{equation} \label{eqn:second_expansion_of_density_of_H}
h_\vartheta(u)=C(\vartheta)\exp(\vartheta T(u)),\qquad 0\le u\le 1,\, \vartheta\in\R.
\end{equation}

In both models we establish local asymptotic normality (LAN) of the point process of
exceedances among $\bfX^{(1)},\dots,\bfX^{(n)}$ above an increasing threshold line. The
results, which are stated in Theorem~\ref{thm:lan} and
Theorem~\ref{thm:lan_exponential_family}, provide in each model the corresponding central
sequence and, thus, optimal tests for testing $\vartheta=0$ against a sequence of
alternatives $\vartheta_n$ converging to zero as the sample size increases. It turns out
that the particular values of the exceedances contribute to the central sequence only in
model \eqref{eqn:first_expansion_of_density_of_H},
whereas in the exponential model \eqref{eqn:second_expansion_of_density_of_H} the
number of exceedances alone yields the central sequence. The fact that just the number of
realizations in  shrinking sets provides the central sequence was characterized for truncated
processes in quite a general framework in \citet{falk98} and \citet{fali98}.

As the central sequences and, thus, the asymptotically optimal tests also
depend on further parameters of the generator process $\bfZ$, which might be unknown in
practice, we consider an omnibus test for testing $\vartheta=0$ as well. We compute its
asymptotic relative efficiency (ARE) with respect to the optimal test in each model. While
ARE is positive in model \eqref{eqn:first_expansion_of_density_of_H}, it is zero in model
\eqref{eqn:second_expansion_of_density_of_H}.

\section{Testing in $\delta$-neighborhoods of a standard GPP} \label{sec:delta-neighborhoods}

This section deals with optimal tests in the model introduced in
\eqref{eqn:first_expansion_of_density_of_H}. We assume that the df of the rv $W\ge 0$ in
\eqref{eqn:definition_of_delta_neighborhood} belongs to a parametric family
$\set{H_\vartheta:\,\vartheta\in\Theta}$ of distributions, where $\Theta$ is an open
subset of $\R$ containing $0$. By $H_0$ we denote the uniform distribution on the interval
$(0,1)$. In addition to \eqref{eqn:first_expansion_of_density_of_H}, it is required that
there is some $u_0\in(0,1)$ such that the density $h_\vartheta(u)$ of $H_\vartheta(u)$
exists for $u\in[0,u_0]$, $\vartheta\in\Theta$, and satisfies for some $\delta\in(0,1]$ the
expansion
\begin{equation} \label{eqn:expansion_of_density_of_df_of_w}
h_\vartheta(u)=1+\vartheta u^\delta+ r_\vartheta(u),\qquad u\in[0,u_0],
\end{equation}
where $r_\vartheta(0)=0$, $\vartheta\in\Theta$, and
\begin{equation} \label{eqn:remainder_delta_neighborhood}
\sup_{0<\abs \vartheta\le \varepsilon_0} \abs{\frac{r_\vartheta(u)}{\vartheta u^{\delta}}}=o(1)
\end{equation}
 as $u\downarrow 0$ for some $\varepsilon_0>0$. Obviously,
\eqref{eqn:remainder_delta_neighborhood} is equivalent with $r_\vartheta(u)=o(\vartheta
u^\delta)$ as $u\downarrow0$, uniformly for $\abs\vartheta\le\eps_0$. Since we have
$h_0=1$ and $r_0=0$, \eqref{eqn:expansion_of_density_of_df_of_w} and
\eqref{eqn:remainder_delta_neighborhood} imply in particular the representation
\begin{equation*}
h_\vartheta(u)=h_0(u)\left(1+O\left(\left(H_0(u)\right)^\delta\right)\right),
\end{equation*}
i.e., the lower tail of $H_\vartheta$ is in a $\delta$-neighborhood of $H_0$; see
\citet[Section~2.2]{fahure10}.

Moreover, we assume
\begin{equation} \label{eqn:generator_bounded_away_from_zero}
  A:=E\left(\inf_{t\in[0,1]}Z_t\right)>0.
\end{equation}
As $\inf_{t\in[0,1]}Z_t\ge 0$, this condition is equivalent with the assumption that
$\inf_{t\in[0,1]}Z_t$ is not the constant function zero. \eqref{eqn:generator_bounded_away_from_zero} is,
for instance, satisfied if $\bfZ=2\bfU$, where $\bfU=\left(U_t\right)_{t\in[0,1]}\in C[0,1]$ is a
copula process such that $-\bfU\in\mathcal D(\bfeta)$, $\bfeta\in  C[0,1]$ being a
standard MSP. This is implied by the fact that in this case $P\left(\inf_{t\in[0,1]}U_t>0\right)=1$.

Note that \eqref{eqn:generator_bounded_away_from_zero} and H\"{o}lder's inequality also give
\begin{equation*}
B:=E\left(\inf_{t\in[0,1]}Z_t^{1+\delta}\right)>0.
\end{equation*}

\subsection{Local asymptotic normality}

In order to derive asymptotically optimal tests in this model, we first establish local
asymptotic normality (LAN) of the point process of exceedances
\begin{equation*}
N_{n,c}(B):=\sum_{i\le n}\varepsilon_{\sup_{t\in[0,1]}\left(X^{(i)}_t/c\right)}(B\cap
[0,1]),\qquad B\in\mathbb{B},
\end{equation*}
where $\bfX^{(i)}$, $i\le n$, are independent copies of $\bfX$ in
\eqref{eqn:definition_of_delta_neighborhood} and $c<0$. $\mathbb{B}$ denotes the
$\sigma$-field of Borel sets of $\R$ and $\varepsilon_x$ is the point measure with mass one
at $x$. Note that
\begin{equation} \label{eqn:exceedance_X_c}
\sup_{t\in[0,1]}\frac{X_t}{c}\le u\iff \bfX\ge cu, \qquad u\in[0,1],
\end{equation}
i.e., the random point measure $N_{n,c}$ actually represents those observations among
$\bfX^{(1)},\dots,\bfX^{(n)}$ which exceed the constant threshold function $c$.

Denote those observations among $\sup_{t\in[0,1]}\left(X^{(i)}_t/c\right)$ with
$\sup_{t\in[0,1]}\left(X^{(i)}_t/c\right)\le 1$, $i\le n$, by
$Y_1,\dots,Y_{\tau(n)}$ in the order of their outcome. Then we have
\begin{equation*}
N_{n,c}(B)=\sum_{k\le \tau(n)}\varepsilon_{Y_k}(B),\qquad
B\in\mathbb{B}.
\end{equation*}
By Theorem 1.4.1 in \citet{reiss93} we may assume without loss of generality that
$Y_1,Y_2,\dots$ are independent copies of a rv $Y$ with df
\begin{equation*}
P_\vartheta(Y\le u)=\frac{P_\vartheta(\bfX\ge cu)}{P_\vartheta(\bfX\ge c)},\qquad 0\le u\le 1,
\end{equation*}
under parameter $\vartheta$, and that they are independent of the total number $\tau(n)$,
which is binomial $B\left(n,P_\vartheta\left(\bfX \ge c\right)\right)$-distributed.

In the next lemma we provide the density $f_{\vartheta,c}$ of $\sup_{t\in[0,1]}\left(X_t/c\right)$ and, thus, the
density of $Y$ under $\vartheta$, which is $f_{\vartheta,c}/P_\vartheta(\bfX\ge c)$. By
$P*Z$ we denote the distribution of a rv $Z$.

\begin{lemma} \label{lem:density_of_exceedances}
Suppose that the distribution of the rv $W$ in \eqref{eqn:definition_of_delta_neighborhood}
belongs to the family $\set{H_\vartheta:\,\vartheta\in\Theta}$. Then there is some
$c_0<0$ such that the density $f_{\vartheta,c}(u)$, with respect to the Lebesgue measure, of the rv
$\sup_{0\le t\le 1}\left(X_t/c\right)$ exists for $\vartheta\in\Theta$, $c\in[c_0,0)$, $u\in\unit$,
and it is given by
\begin{align*}
f_{\vartheta,c}(u)&=\abs c \int_0^m z\, h_\vartheta(\abs c z u)\,\left(P*\inf_{t\in[0,1]}Z_t\right)(dz).
\intertext{Furthermore there exists $\eps_0>0$ such that}
f_{\vartheta,c}(u)&=\abs c A +\vartheta \abs c^{1+\delta} B u^\delta+o\left(\vartheta \abs c^{1+\delta}\right)
\end{align*}
uniformly for $\abs\vartheta\le\varepsilon_0$ and $u\in[0,1]$ as $c\uparrow0$; note that
$f_{0,c}(u)=\abs c A$.
\end{lemma}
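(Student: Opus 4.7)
The plan is to rewrite the event $\{\sup_t X_t/c\le u\}$ as a condition on $W$ conditional on $\bfZ$, obtain the df of $\sup_t X_t/c$ as an expectation of $H_\vartheta$, recover the density via Fubini, and then substitute the expansion \eqref{eqn:expansion_of_density_of_df_of_w} and control the remainder using \eqref{eqn:remainder_delta_neighborhood}. By \eqref{eqn:exceedance_X_c}, $\{\sup_t X_t/c\le u\}=\{\bfX\ge cu\}$. Pick $c_0\in(M,0)$ with $\abs{c_0}m\le u_0$. For $c\in[c_0,0)$ and $u\in\unit$ we have $cu\in[c,0]\subset(M,0]$, so the truncation at $M$ is inactive: on $\set{Z_t>0}$ the inequality $X_t=\max(-W/Z_t,M)\ge cu$ reduces to $W\le\abs c u Z_t$, and this condition also covers the $t$'s with $Z_t=0$ consistently. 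Intersecting over $t$ and using the independence of $W$ and $\bfZ$,
\begin{equation*}
P_\vartheta\!\left(\sup_{t\in\unit}\frac{X_t}{c}\le u\right)=\int_0^m H_\vartheta(\abs c u z)\PinfZdz.
\end{equation*}

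Because $\abs c u z\in[0,u_0]$, the substitution $s=\abs c v z$ gives $H_\vartheta(\abs c u z)=\int_0^u\abs c z\,h_\vartheta(\abs c v z)\,dv$ for every $z\in[0,m]$; inserting this into the preceding display and applying Fubini (the integrand is nonnegative) yields the claimed integral representation for $f_{\vartheta,c}$. Now substitute $h_\vartheta(s)=1+\vartheta s^\delta+r_\vartheta(s)$ with $s=\abs c u z$: the leading term integrates to $\abs c A$, and the term $\vartheta(\abs c u z)^\delta$ integrates to $\vartheta\abs c^{1+\delta}u^\delta B$, using that $(\inf_t Z_t)^{1+\delta}=\inf_t Z_t^{1+\delta}$ for $\delta>0$. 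For the remainder, fix $\eta>0$; by \eqref{eqn:remainder_delta_neighborhood} there exists $v_0>0$ with $\abs{r_\vartheta(s)}\le\eta\abs\vartheta s^\delta$ for all $s\in[0,v_0]$ and $\abs\vartheta\le\eps_0$. Shrinking $c_0$ further so $\abs{c_0}m\le v_0$, the remainder integral is bounded in modulus by $\eta\abs\vartheta\abs c^{1+\delta}u^\delta B\le\eta\abs\vartheta\abs c^{1+\delta}B$, uniformly in $u\in\unit$ and $\abs\vartheta\le\eps_0$; since $\eta$ is arbitrary, this is the required $o(\vartheta\abs c^{1+\delta})$ bound as $c\uparrow 0$.

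Conceptually the argument is a direct Taylor expansion under the integral sign. The only mild point of care is the reduction of $\{\bfX\ge cu\}$ to $\{W\le\abs c u\inf_t Z_t\}$, which requires the choice of $c_0$ to deactivate the truncation by $M$ and a clean treatment of paths with $\inf_t Z_t=0$; with these in place, the verification of the expansion is essentially bookkeeping.
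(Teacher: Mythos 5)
Your proposal is correct and follows essentially the same route as the paper's proof: reduce $\{\sup_t X_t/c\le u\}$ to $\{W\le\abs c u\inf_t Z_t\}$ after choosing $c_0$ to deactivate the truncation at $M$, express the df as $\int_0^m H_\vartheta(\abs c uz)\,(P*\inf_t Z_t)(dz)$, identify the density via Fubini, and expand $h_\vartheta$ under the integral with the remainder controlled uniformly by \eqref{eqn:remainder_delta_neighborhood}. Your explicit $\eta$--$v_0$ treatment of the remainder is in fact slightly more detailed than the paper's parenthetical remark, but the argument is the same.
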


\begin{proof}
Let $m$ be given as in Definition~\ref{defn:GPP_and_generator} and $u_0,\eps_0,\delta$
be given as in Equation~\eqref{eqn:expansion_of_density_of_df_of_w}. Then we obtain for
$c_0>\max\set{M,-u_0/m}$, $\vartheta\in\Theta$, $c\in[c_0,0)$ and $u\in\unit$ by
conditioning on $\inf_{t\in\unit}Z_t=z$ and Fubini's theorem
\begin{align*}
P_\vartheta(\bfX\ge cu)
&=P_\vartheta\left(\max\left(-\frac W{Z_t},M\right)\ge cu,\,t\in[0,1]\right)\\
&=P_\vartheta(W\le \abs c u Z_t,\,t\in[0,1])\\
&=P_\vartheta\left(W\le \abs c u \inf_{t\in[0,1]}Z_t\right)\\
&=\int_0^m P_\vartheta(W\le \abs c zu )\, \left(P*\inf_{t\in[0,1]}Z_t\right)(dz)\\
&=\int_0^m H_\vartheta(\abs c z u)\left(P*\inf_{t\in[0,1]}Z_t\right)(dz)\\
&=\int_0^u\abs c \int_0^m z\, h_\vartheta(\abs c z x) \PinfZdz\,dx.
\end{align*}
This representation implies that $\sup_{t\in[0,1]}\left(X_t/c\right)$ has the density
\begin{align*}
f_{\vartheta,c}(u)
&=\abs c \int_0^m z\, h_\vartheta(\abs c z u) \left(P*\inf_{t\in[0,1]}Z_t\right)(dz)\\
&=\abs c \int_0^m z \left(1+\vartheta(\abs c z u)^\delta +  r_\vartheta(\abs c z u)\right)\, \left(P*\inf_{t\in[0,1]}Z_t\right)(dz)\\
&=\abs c E\left(\inf_{t\in[0,1]}Z_t\right) + \vartheta \abs c^{1+\delta} u^\delta E\left(\left(\inf_{t\in[0,1]}Z_t\right)^{1+\delta}\right)+ o\left(\vartheta\abs c^{1+\delta}\right)\\
&=\abs c A+\vartheta \abs c^{1+\delta}u^\delta B + o\left(\vartheta\abs c^{1+\delta}\right)
\end{align*}
as $c\uparrow 0$, uniformly for $\abs \vartheta\le\varepsilon_0$ and $u\in[0,1]$. (Note that
$r_\vartheta(\abs c zu) = o\left(\vartheta\abs{c}^\delta\right)$ uniformly for
$\abs \vartheta\le\varepsilon_0$, $u\in[0,1]$ and $z\in[0,m]$ as $c\uparrow0$.) As
$H_0(u)=u$, we have $h_0(u)=1$, $u\in [0,1]$, which completes the proof.
\end{proof}

If $c_0\le c<0$, we have $P_0(Y\le u)=u$, i.e., the distribution
$\mathcal{L}_{\vartheta,c}(Y)$ of $Y$ under $\vartheta$ is then dominated by
$\mathcal{L}_{0,c}(Y)$. Thus the distribution $\mathcal{L}_\vartheta(N_{n,c})$ of
$N_{n,c}$ under $\vartheta$ is, in this case, dominated by $\mathcal{L}_{0}(N_{n,c})$ --
see, e.g., Theorem 3.1.2 in \citet{reiss93} -- and we obtain from \citet[Example
3.1.2]{reiss93} the $\mathcal{L}_{0}(N_{n,c})$-density of
$\mathcal{L}_\vartheta(N_{n,c})$
\begin{equation*}
  \begin{aligned}
    \frac{d\mathcal{L}_\vartheta(N_{n,c})}{d\mathcal{L}_{0}(N_{n,c})}(\mu) %\\
    &=\left(\prod_{i=1}^{\mu(\unit)}\frac{f_{\vartheta,c}(y_i)}{f_{0,c}(y_i)} \frac{P_{0}(\bfX \ge c)}{P_\vartheta(\bfX \ge c)}\right)\times{} \\
    &\tab{=\Bigg(\prod_{i=1}^{\mu(\unit)}}
      \times \left(\frac{P_\vartheta(\bfX \ge c)}{P_{0}(\bfX \ge c)}\right)^{\mu(\unit)}
      \left(\frac{1-P_\vartheta(\bfX \ge c)}{1-P_{0}(\bfX \ge c)}\right)^{n-\mu(\unit)}
  \end{aligned}
\end{equation*}
where $\mu=\sum_{i=1}^{\mu(\unit)}\varepsilon_{y_i}$, $0\le y_1,\dots,y_{\mu(\unit)}\le
1$ and $\mu(\unit)\le n$. The loglikelihood ratio is, consequently,
\begin{equation} \label{eqn:loglikelihood_ratio}
\begin{aligned}
L_{n,c}(\vartheta\mid0)
&:=\log\left\{\frac{d\mathcal{L}_\vartheta(N_{n,c})}{d\mathcal{L}_{0}(N_{n,c})}(N_{n,c}) \right\} \\
&\tab*{:}=\sum_{i\le\tau(n)}\log\left(\frac{f_{\vartheta,c}(Y_i)}{f_{0,c}(Y_i)}\frac{P_{0}(\bfX \ge c)}{P_\vartheta(\bfX \ge c)} \right) +\tau(n)\log\left(\frac{P_\vartheta(\bfX \ge c)}{P_{0}(\bfX \ge c)}\right) \\
&\tab{:=}+(n-\tau(n))\log\left(\frac{1-P_\vartheta(\bfX \ge c)}{1-P_{0}(\bfX \ge c)} \right).
\end{aligned}
\end{equation}

We let in the sequel $c=c_n$ depend on the sample size $n$ with
$c_n\uparrow 0$ and, equally, $\vartheta=\vartheta_n$ with
$\vartheta_n\to0$ as $n\to\infty$. Precisely, we put with
arbitrary $\xi\in\R$
\begin{equation} \label{eqn:sequence_of_alternatives_delta}
\vartheta_n:=\vartheta_n(\xi):=\frac{\xi}{\left(n\abs{c_n}^{1+2\delta}\right)^{1/2}}.
\end{equation}
The following result finally proves the desired LAN property of $N_{n,c}$; it is a crucial tool
for deriving asymptotically optimal tests in the subsequent subsection.

\begin{theorem} \label{thm:lan}
Suppose that $c_n\uparrow 0$, $n \abs{c_n}^{1+2\delta}\to \infty$ as $n\to\infty$. Then
we obtain for $\vartheta_n$ as in \eqref{eqn:sequence_of_alternatives_delta} the
expansion
\begin{align*}
L_{n,c_n}(\vartheta_n\mid 0)&= \frac{\xi B}{(1+\delta) A^{1/2}}(Z_{n1}+Z_{n2})- \frac{\xi^2 B^2}{2A(2\delta+1)}  +o_{P_{0}}(1)\\
&\to_{D_{0}} N\left(- \frac{\xi^2 B^2}{2A(2\delta+1)}, \frac{\xi^2 B^2}{A(2\delta+1)}\right)
\end{align*}
with
\begin{equation*}
Z_{n1}:=\frac{\tau(n)-n\abs{c_n} A}{(n\abs{c_n} A)^{1/2}}\to_{D_{0}} N(0,1),
\end{equation*}
and
\begin{equation*}
Z_{n2}:=\frac {1+\delta}{\tau(n)^{1/2}}\sum_{k\le\tau(n)}\left(Y_k^\delta-\frac 1{1+\delta}\right) \to_{D_{0}} N\left(0,\frac{\delta^2}{2\delta+1}\right)
\end{equation*}
being independent.
\end{theorem}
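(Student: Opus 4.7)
The plan is to Taylor-expand each of the three pieces of the loglikelihood \eqref{eqn:loglikelihood_ratio} using Lemma~\ref{lem:density_of_exceedances}, collect the linear and quadratic contributions, and identify $Z_{n1}$ and $Z_{n2}$ as the only surviving random ingredients. The governing small parameter is
\[
a_n := \vartheta_n\abs{c_n}^\delta \frac{B}{A} = \frac{\xi B}{A(n\abs{c_n})^{1/2}},
\]
which tends to zero because $n\abs{c_n}\to\infty$ (a consequence of $n\abs{c_n}^{1+2\delta}\to\infty$ together with $c_n\uparrow 0$). Lemma~\ref{lem:density_of_exceedances} yields, uniformly in $u\in[0,1]$, the expansions $f_{\vartheta_n,c_n}(u)/f_{0,c_n}(u) = 1 + a_n u^\delta + o(a_n)$ and, by integrating over $u\in[0,1]$, $P_{\vartheta_n}(\bfX\ge c_n)/P_{0}(\bfX\ge c_n) = 1 + a_n/(1+\delta) + o(a_n)$.

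I first handle the counting contribution $\tau(n)\log(q_n/p_n) + (n-\tau(n))\log((1-q_n)/(1-p_n))$, with $p_n := P_{0}(\bfX\ge c_n) = \abs{c_n}A$ and $q_n := P_{\vartheta_n}(\bfX\ge c_n)$. Applying $\log(1+x) = x - x^2/2 + O(x^3)$ and the algebraic identity $\tau(n)/p_n - (n-\tau(n))/(1-p_n) = (\tau(n)-np_n)/[p_n(1-p_n)]$ reduces this piece to $\xi B Z_{n1}/[(1+\delta)A^{1/2}] - \xi^2 B^2/[2A(1+\delta)^2] + o_{P_{0}}(1)$; the other second-order contribution from the $(n-\tau(n))$-block is of order $n\abs{c_n}^2 a_n^2 = O(\abs{c_n})$ and hence negligible.

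For the density-ratio sum, setting $\phi(u) := u^\delta - 1/(1+\delta)$ and Taylor-expanding the ratio $(f_{\vartheta_n,c_n}(Y_i)/f_{0,c_n}(Y_i))(P_{0}(\bfX\ge c_n)/P_{\vartheta_n}(\bfX\ge c_n))$, the sum becomes $a_n\sum_{i\le\tau(n)}\phi(Y_i) - (a_n^2/2)\sum_{i\le\tau(n)}\phi(Y_i)^2 + o_{P_{0}}(1)$. The linear part equals $a_n\tau(n)^{1/2}Z_{n2}/(1+\delta)$ and converges in $P_0$-probability to $\xi B Z_{n2}/[(1+\delta)A^{1/2}]$ because $a_n\tau(n)^{1/2}\to_{P_{0}}\xi B/A^{1/2}$; the quadratic part, by the law of large numbers under $H_0$-uniformity of $Y_i$ and $\operatorname{Var}(Y^\delta) = \delta^2/[(2\delta+1)(1+\delta)^2]$, converges to $-\xi^2 B^2\delta^2/[2A(2\delta+1)(1+\delta)^2]$. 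Summing both quadratic contributions and invoking $1 + \delta^2/(2\delta+1) = (1+\delta)^2/(2\delta+1)$ collapses them to the claimed $-\xi^2 B^2/[2A(2\delta+1)]$, so the stated expansion for $L_{n,c_n}(\vartheta_n\mid 0)$ follows.

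The weak convergences are then standard: $Z_{n1}\to_{D_{0}} N(0,1)$ is de Moivre--Laplace for $\tau(n)\sim B(n,p_n)$ with $np_n\to\infty$, and $Z_{n2}\to_{D_{0}} N(0,\delta^2/(2\delta+1))$ follows from Lindeberg's CLT applied conditionally on $\tau(n)$ to the bounded centered functions $\phi(Y_i)$ of i.i.d.\ uniforms; independence in the limit holds because $Z_{n1}$ is $\sigma(\tau(n))$-measurable while the conditional limit of $Z_{n2}$ given $\tau(n)$ is free of $\tau(n)$. I expect the principal obstacle to be the cumulative effect of the $o(a_n)$ remainder over the random number $\tau(n) = O_{P_{0}}(n\abs{c_n})$ of exceedances, since the crude pointwise bound $\tau(n)\cdot o(a_n)$ is of order $(n\abs{c_n})^{1/2}$ and fails to vanish. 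The saving is that $g_{\vartheta_n,c_n}$ and $g_{0,c_n}$ are both probability densities on $[0,1]$, so the remainder integrates to zero and its sum has $P_0$-mean zero with variance $\tau(n)\cdot o(a_n^2) = O(n\abs{c_n})\cdot o((n\abs{c_n})^{-1}) = o(1)$, yielding the required $o_{P_{0}}(1)$.
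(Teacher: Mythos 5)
Your proposal is correct and follows essentially the same route as the paper: Taylor expansion of the binomial (counting) part of the loglikelihood to extract $Z_{n1}$, Taylor expansion of the density-ratio sum to extract $Z_{n2}$, cancellation of the two quadratic contributions via $1+\delta^2/(2\delta+1)=(1+\delta)^2/(2\delta+1)$, and control of the accumulated remainder by centering it (mean zero since the normalized densities integrate to one) and bounding its variance by $\tau(n)\cdot o(a_n^2)=o(1)$ — which is exactly how the paper handles its remainder $r_0(Y_k,\vartheta_n,c_n)-E_0(r_0)$. The only cosmetic difference is that you multiply the density ratio by $P_0(\bfX\ge c_n)/P_{\vartheta_n}(\bfX\ge c_n)$ before taking logarithms, whereas the paper expands the two logarithms separately and recombines.
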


\begin{proof}
First we compile several facts that will be used in the proof. From Lemma  \ref{lem:density_of_exceedances} we obtain
\begin{equation*} \tag{Fact 1} \label{eqn:fact_1}
P_{0}(\bfX \ge c_n)=P(\bfV \ge c_n)=\abs{c_n} A
\end{equation*}
and, thus, a suitable version of the central limit theorem implies
\begin{equation*} \tag{Fact 2} \label{eqn:fact_2}
\frac{\tau(n)-n\abs{c_n} A}{(n\abs{c_n} A)^{1/2}}\to_{D_{0}} N(0,1).
\end{equation*}
Moreover, we conclude from Lemma~\ref{lem:density_of_exceedances} for $\abs
\vartheta\le\varepsilon_0$
\begin{equation*} \tag{Fact 3} \label{eqn:fact_3}
P_\vartheta(\bfX \ge c_n)-P_{0}(\bfX \ge c_n)
=\abs{c_n}^{1+\delta}\frac \vartheta{1+\delta} B+o\left(\vartheta\abs{c_n}^{1+\delta}\right)
\end{equation*}
and, thus,
\begin{equation*} \tag{Fact 4} \label{eqn:fact_4}
\frac{P_{\vartheta_n}(\bfX \ge c_n)- P_{0}(\bfX \ge c_n)}{P_{0}(\bfX \ge c_n)} =\frac 1{(n\abs{c_n})^{1/2}} \frac{\xi B}{(1+\delta) A}+ o\left(\frac 1{(n\abs{c_n})^{1/2}}\right).
\end{equation*}

Hence, Taylor expansion
$\log(1+\varepsilon)=\varepsilon-\varepsilon^2/2+o(\varepsilon^2)$ as $\varepsilon\to 0$
implies
\begin{align*}
&\tau(n)\log\left(\frac{P_{\vartheta_n}(\bfX \ge c_n)}{P_{0}(\bfX \ge c_n)}\right)
    + (n-\tau(n))\log\left(\frac{1-P_{\vartheta_n}(\bfX \ge c_n)}{1-P_{0}(\bfX \ge c_n)}\right) \\*
&=\tau(n)\Biggl\{\frac{P_{\vartheta_n}(\bfX \ge c_n)-P_{0}(\bfX \ge c_n)}{P_{0}(\bfX \ge c_n)}
    - \frac12 \left(\frac{P_{\vartheta_n}(\bfX \ge c_n)-P_{0}(\bfX \ge c_n)}{P_{0}(\bfX \ge c_n)}\right)^2 \\
&\tab{=\tau(n)\Biggl\{}
    + O\left(\abs{\frac{P_{\vartheta_n}(\bfX \ge c_n)-P_{0}(\bfX \ge c_n)}{P_{0}(\bfX \ge c_n)}}^3\right)\Biggr\} \\
&\tab{=}
    + (n-\tau(n))\biggl\{\frac{P_{0}(\bfX \ge c_n) - P_{\vartheta_n}(\bfX \ge c_n)}{1-P_{0}(\bfX \ge c_n)} \\
&\tab{= + (n-\tau(n))\biggl\{}
    + O\left(\abs{P_{\vartheta_n}(\bfX \ge c_n)-P_{0}(\bfX \ge c_n)}^2\right)\biggr\}\\
&= \tau(n)\frac{P_{\vartheta_n}(\bfX \ge c_n)-P_{0}(\bfX \ge c_n)}{P_{0}(\bfX \ge c_n)}
    + (n-\tau(n)) \frac{P_{0}(\bfX \ge c_n)-P_{\vartheta_n}(\bfX \ge c_n)}{1-P_{0}(\bfX \ge c_n)}\\
&\tab{=}
    - \frac{\tau(n)}2 \left( \frac{P_{\vartheta_n}(\bfX \ge c_n)-P_{0}(\bfX\ge c_n)} {P_{0}(\bfX \ge c_n)}\right)^2
    + o_{P_{0}}(1)
\end{align*}

as
\begin{align*}
\tau(n)\abs{\frac{P_{\vartheta_n}(\bfX \ge c_n) - P_{0}(\bfX \ge c_n)}{P_{0}(\bfX \ge c_n)}}^3
&\sim E_{0}(\tau(n))\,O\left(\frac 1{(n\abs{c_n})^{3/2}}\right) \\
&=nP_{0}(\bfX \ge c_n)\, O\left(\frac 1{(n\abs{c_n})^{3/2}}\right)
\to_{n\to\infty}0,
\end{align*}
where $\sim$ denotes asymptotic equivalence. The preceding convergence to zero follows
from the condition $n\abs{c_n}^{1+2\delta}\to_{n\to\infty}\infty$ and the equivalence
\begin{equation*}
(n-\tau(n))\abs{P_{\vartheta_n}(\bfX \ge c_n)-P_{0}(\bfX \ge c_n)}^2
\sim O\left(\abs{c_n}\right)
\to_{n\to\infty}0.
\end{equation*}

From the law of large numbers we obtain
\begin{align*}
&\tau(n)\left(\frac{P_{\vartheta_n}(\bfX \ge c_n)- P_{0}(\bfX \ge c_n)}{P_{0}(\bfX \ge c_n)}\right)^2\\
&\sim E(\tau(n))\left(\frac 1{(n\abs{c_n})^{1/2}}\frac{\xi B}{(1+\delta) A}+ o\left(\frac 1{(n\abs{c_n})^{1/2}}\right)\right)^2\\
&=n\abs{c_n} A \left(\frac 1{(n\abs{c_n})^{1/2}}\frac{\xi B}{(1+\delta) A}+ o\left(\frac 1{(n\abs{c_n})^{1/2}}\right)\right)^2\\
&\to_{n\to\infty} \frac{\xi^2 B^2}{(1+\delta)^2 A}.
\end{align*}
Moreover,
\begin{align*}
&\tau(n)
\frac{P_{\vartheta_n}(\bfX \ge c_n)-P_{0}(\bfX \ge c_n)}{P_{0}(\bfX \ge c_n)}
+(n-\tau(n)) \frac{P_{0} (\bfX \ge c_n)-P_{\vartheta_n}(\bfX \ge c_n)}{1-P_{0}(\bfX \ge c_n)}\\
&=\frac{P_{\vartheta_n}(\bfX\ge c_n)-P_{0}(\bfX\ge  c_n)}{P_{0}(\bfX\ge c_n)(1-P_{0}(\bfX\ge c_n))}
(\tau(n)-nP_{0}(\bfX> c_n))\\
&=(n \abs{c_n} A)^{1/2} \left(\frac 1{(n\abs{c_n})^{1/2}} \frac{\xi B}{(1+\delta) A(1+o(1))}+o\left(\frac 1{(n\abs{c_n})^{1/2}}\right)\right) \times{} \\
&\tab{=(}
    \times \frac{\tau(n)-nP_{0}(\bfX \ge c_n)}{(n \abs{c_n} A)^{1/2}}\\
&=\frac{\xi B}{(1+\delta) A^{1/2}} Z_{n1}+o_{P_{0}}(1).
\end{align*}
Altogether we have shown so far that
\begin{align*}
&\tau(n)\log\left(\frac{P_{\vartheta_n}(\bfX\ge c_n)}{P_0(\bfX\ge c_n)}\right) + (n-\tau(n))\log\left(\frac{1-P_{\vartheta_n}(\bfX\ge c_n)} {1-P_0(\bfX\ge c_n)}\right)\\
&=\frac{\xi B}{(1+\delta)A^{1/2}} Z_{n1} - \frac{\xi^2 B^2}{2(1+\delta)^2 A} + o_{P_0}(1).
\end{align*}

Next we show that
\begin{equation*}
\sum_{k\le\tau(n)}\log\left(\frac{f_{\vartheta_n,c_n}(Y_k)}{f_{0,c_n}(Y_k)} \frac{P_{0}(\bfX \ge c_n)}{P_{\vartheta_n}(\bfX \ge c_n)}\right)= \frac{\xi B}{A^{1/2} (1+\delta)} Z_{n2}-\frac{\xi^2 B^2\delta^2}{2A(2\delta+1)(1+\delta)^2}+o_{P_{0}}(1).
\end{equation*}
We have by Lemma \ref{lem:density_of_exceedances}
\begin{equation*}
\frac{f_{\vartheta_n,c_n}(Y_k)}{f_{0,c_n}(Y_k)} = 1 + \frac \xi{(n\abs{c_n})^{1/2}} \frac B A Y_k^\delta + r_{0}(Y_k,\vartheta_n,c_n),
\end{equation*}
where $r_{0}(Y_k,\vartheta_n,c_n)=o\left(\left(n\abs{c_n}\right)^{-1/2}\right)$ uniformly for $k$
and $n$ with
\begin{align*}
&E_{0}\left(r_{0}(Y_1,\vartheta_n,c_n)\right)\\
&=\int_0^1 r_{0}(t,\vartheta_n,c_n) \frac{f_{0,c_n}(t)}{P_{0}(\bfX \ge c_n)}\,dt\\
&= \int_0^1\left(\frac{f_{\vartheta_n,c_n}(t)}{f_{0,c_n}(t)} - 1 - \frac \xi{(n\abs{c_n})^{1/2}} \frac B A t^\delta\right) \frac{f_{0,c_n}(t)}{P_{0}(\bfX \ge c_n)}\,dt\\
&= \frac{P_{\vartheta_n}(\bfX \ge c_n) - P_{0}(\bfX \ge c_n)}{P_{0}(\bfX \ge c_n)}
   - \frac \xi{(n\abs{c_n})^{1/2}} \frac B A \int_0^1 t^\delta \,dt\\
&= \frac{P_{\vartheta_n}(\bfX \ge c_n) - P_{0}(\bfX \ge c_n)}{P_{0}(\bfX \ge c_n)}
   - \frac \xi{(n\abs{c_n})^{1/2}} \frac B {(1+\delta)A}
\end{align*}
and
\begin{equation*}
Var_{0}\left(r_{0}(Y_1,\vartheta_n,c_n)\right) \le E_{0}\left(r^2_{0}(Y_1,\vartheta_n,c_n)\right) = o\left(1/(n\abs{c_n})\right).
\end{equation*}
Using again the Taylor expansion
$\log(1+\varepsilon)=\varepsilon-\varepsilon^2/2+O(\varepsilon^3)$ as $\varepsilon\to 0$,
we deduce
\begin{align*}
&\sum_{k\le\tau(n)}\log\left(\frac{f_{\vartheta_n,c_n}(Y_k)}{f_{0,c_n}(Y_k)} \frac{P_{0}(\bfX \ge c_n)}{P_{\vartheta_n}(\bfX \ge c_n)}\right)\\
&=\sum_{k\le\tau(n)}\log\left(1+ \frac \xi{(n\abs{c_n})^{1/2}} \frac B A Y_k^\delta + r_{0}(Y_k,\vartheta_n,c_n)\right)\\
&\hspace*{.5cm}-\tau(n)\log\left(1 + \frac{P_{\vartheta_n}(\bfX \ge c_n) - P_{0}(\bfX \ge c_n)}{P_{0}(\bfX \ge c_n)}\right)\\
&= \sum_{k\le\tau(n)}\left(\frac \xi{(n\abs{c_n})^{1/2}} \frac B A Y_k^\delta + r_{0}(Y_k,\vartheta_n,c_n)- \frac{\xi^2}{2n\abs{c_n}} \frac{B^2}{A^2} Y_k^{2\delta}\right)\\
&\hspace*{.5cm}-\tau(n)\left( \frac{P_{\vartheta_n}(\bfX \ge c_n) - P_{0}(\bfX \ge c_n)}{P_{0}(\bfX \ge c_n)} - \frac 12 \left( \frac{P_{\vartheta_n}(\bfX \ge c_n) - P_{0}(\bfX \ge c_n)}{P_{0}(\bfX \ge c_n)}\right)^2\right)\\
&\hspace*{.5cm} + o_{P_{0}}(1)\\
&= \sum_{k\le\tau(n)}\left(\frac \xi{(n\abs{c_n})^{1/2}} \frac B A \left(Y_k^\delta -\frac 1{1+\delta}\right) + r_{0}(Y_k,\vartheta_n,c_n)- E_{0}(r_{0}(Y_1,\vartheta_n,c_n))\right)\\
&\hspace*{.5cm}- \frac{\xi^2}{2n\abs{c_n}} \frac{B^2}{A^2}\sum_{k\le\tau(n)} Y_k^{2\delta} + \frac{\tau(n)}{n\abs{c_n}} \frac{\xi^2B^2}{2(1+\delta)^2 A^2} + o_{P_{0}}(1)\\
&=\frac{\tau(n)^{1/2}}{(n\abs{c_n})^{1/2}} \frac 1{\tau(n)^{1/2}} \frac{\xi B} A \sum_{k\le\tau(n)}\left(Y_k^\delta - \frac 1{1+\delta}\right)\\
&\hspace*{.5cm}-\frac{\tau(n)}{2n\abs{c_n}}\frac{\xi^2 B^2}{A^2} \frac 1{\tau(n)} \sum_{k\le\tau(n)} Y_k^{2\delta} + \frac{\tau(n)}{n\abs{c_n}} \frac{\xi^2 B^2}{2(1+\delta)^2 A^2} + o_{P_{0}}(1)\\
&=\frac {\xi B} {A^{1/2}} \frac 1{\tau(n)^{1/2}} \sum_{k\le\tau(n)} \left(Y_k^\delta - \frac 1{1+\delta}\right) - \frac{\xi^2 B^2\delta^2}{2A (2\delta+1)(1+\delta)^2} + o_{P_{0}}(1)\\
&\to_{D_{0}} N\left(-\frac{\xi^2 B^2\delta^2}{2A (2\delta+1)(1+\delta)^2}, \frac{\xi^2 B^2\delta^2}{A (2\delta+1)(1+\delta)^2}\right)
\end{align*}
by the law of large numbers and the central limit theorem. This completes the proof.
\end{proof}

\subsection{Testing $\vartheta=0$ against $\vartheta=\vartheta_n$}

Denote by $u_\alpha=\Phi^{-1}(1-\alpha)$ the $(1-\alpha)$-quantile of the standard
normal df. By the Neyman-Pearson lemma and Theorem~\ref{thm:lan}, the test statistic
\begin{equation*}
\varphi_1\left(N_{n,c_n}\right)=1_{(u_\alpha,\infty)}\left(\frac{(2\delta+1)^{1/2}}{1+\delta} (Z_{n1}+Z_{n2})\right)
\end{equation*}
defines an asymptotically optimal level-$\alpha$ test, based on $N_{n,c_n}$, for $H_0:\;\vartheta=0$ against $\vartheta_n=\vartheta_n(\xi)=\xi/\left(n\abs{c_n}^{1+2\delta}\right)^{1/2}$ with $\xi>0$. As $\varphi_1\left(N_{n,c_n}\right)$ does not depend on $\xi$, this test is asymptotically optimal, uniformly in $\xi>0$.

The corresponding uniformly asymptotically optimal test for $H_0$ against $\vartheta_n(\xi)$ with $\xi<0$ is
\begin{equation*}
\varphi_2\left(N_{n,c_n}\right)=1_{(-\infty,-u_\alpha)}\left(\frac{(2\delta+1)^{1/2}}{1+\delta} (Z_{n1}+Z_{n2})\right).
\end{equation*}

The asymptotic power functions of these tests are provided by Theorem \ref{thm:lan} as
well. By LeCam's third lemma we obtain that under $\vartheta_n=\vartheta_n(\xi)$
\begin{align*}
L_{n,c_n}(\vartheta_n\mid 0)
&= \frac{\xi B}{(1+\delta) A^{1/2}}(Z_{n1}+Z_{n2})- \frac{\xi^2 B^2}{2A(2\delta+1)}  +o_{P_{\vartheta_n}}(1)\\
&\to_{D_{\vartheta_n}} N\left(\frac{\xi^2 B^2}{2A(2\delta+1)}, \frac{\xi^2 B^2}{A(2\delta+1)}\right)
\end{align*}
with
\begin{equation*}
Z_{n1}+Z_{n2} \to_{D_{\vartheta_n}} N\left(\frac{\xi B (1+\delta)}{A^{1/2}(2\delta+1)}, \frac{(1+\delta)^2}{2\delta+1}\right).
\end{equation*}

The asymptotic power functions of $\varphi_i$ are, consequently, given by
\begin{equation} \label{eqn:power_function_of_optimal test}
\lim_{n\to\infty}E_{\vartheta_n(\xi)}\left(\varphi_i\left(N_{n,c_n}\right)\right) = 1-\Phi\left(u_\alpha-\frac{\abs \xi B}{A^{1/2}(2\delta+1)^{1/2}}\right).
\end{equation}

A disadvantage of the optimal test statistics $\varphi_i\left(N_{n,c_n}\right)$ is the fact
that they require explicit knowledge of the constants $A$ and $\delta$. To overcome this
disadvantage, we consider in the following an alternative test.

Recall that the observations $Y_1,Y_2,\dots$ are independent and, under $\vartheta=0$,
uniformly on $\unit*$ distributed rv if the threshold $c$ is close to zero. Conditional on
the assumption that there is at least one exceedance, i.e., conditionally on $\tau(n)>0$,
the test statistic
\begin{equation*}
T_{n,c}:=\frac 1{\tau(n)^{1/2}} \sum_{k=1}^{\tau(n)} \Phi^{-1}(Y_k)
\end{equation*}
is under $H_0$ exactly $N(0,1)$-distributed. By $\Phi$ we denote the standard normal df. This
test statistic is analogous to that in \citet{falm09} for testing for a multivariate generalized
Pareto distribution.

The next result provides the asymptotic distribution of $T_{n,c_n}$ under the alternative
$\vartheta_n=\vartheta_n(\xi)$ as $n\to\infty$.

\begin{proposition} \label{prop:asymptotic_distribution_of_alternative_test_statistic}
Under the assumptions of Theorem \ref{thm:lan} we have
\begin{equation*}
T_{n,c_n}\to_{D_{\vartheta_n}} N\left(\xi \frac B {A^{1/2}} \int_{-\infty}^\infty x(\Phi(x))^\delta\varphi(x)\,dx,1\right).
\end{equation*}
\end{proposition}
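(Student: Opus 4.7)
The approach I would take is to apply LeCam's third lemma to the pair $(T_{n,c_n}, L_{n,c_n}(\vartheta_n\mid 0))$, in parallel with the argument already used for the optimal test preceding \eqref{eqn:power_function_of_optimal test}. Under $P_0$, for $c_n$ close to zero, the variables $Y_1,\dots,Y_{\tau(n)}$ are i.i.d.\ uniform on $(0,1)$ and independent of $\tau(n)$, so $\Phi^{-1}(Y_k)$ are i.i.d.\ standard normal and $T_{n,c_n}$ is exactly $N(0,1)$-distributed on the event $\{\tau(n)>0\}$, whose $P_0$-probability tends to $1$ because $n\abs{c_n}\ge n\abs{c_n}^{1+2\delta}\to\infty$ (recall $\abs{c_n}\to 0$ and $\delta\in(0,1]$). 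What remains is to identify the asymptotic covariance of $T_{n,c_n}$ with $L_{n,c_n}(\vartheta_n\mid 0)$ under $P_0$ and to invoke LeCam's third lemma.

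Theorem \ref{thm:lan} writes the log-likelihood ratio, up to an $o_{P_0}(1)$ term and a deterministic centering, as $\frac{\xi B}{(1+\delta)A^{1/2}}(Z_{n1}+Z_{n2})$. Since $Z_{n1}$ is $\sigma(\tau(n))$-measurable and $E_0(T_{n,c_n}\mid\tau(n))=0$, the weak limits of $T_{n,c_n}$ and $Z_{n1}$ will be independent in the joint $P_0$-limit. For the pair $(T_{n,c_n}, Z_{n2})$, conditioning on $\tau(n)=m$ and applying the bivariate central limit theorem to the i.i.d.\ vectors
\begin{equation*}
\bigl(\Phi^{-1}(Y_k),\,(1+\delta)(Y_k^\delta-\tfrac{1}{1+\delta})\bigr),\qquad k=1,2,\dots,
\end{equation*}
yields a bivariate normal conditional limit as $m\to\infty$ whose off-diagonal entry, with $U$ uniform on $(0,1)$, equals
\begin{equation*}
\mathrm{Cov}_0\!\left(\Phi^{-1}(U),\,(1+\delta)(U^\delta-\tfrac{1}{1+\delta})\right)
=(1+\delta)\int_0^1 \Phi^{-1}(u)u^\delta\,du
=(1+\delta)\int_{-\infty}^\infty x\Phi(x)^\delta\varphi(x)\,dx
\end{equation*}
via the substitution $u=\Phi(x)$. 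Because this conditional limit does not depend on $m$ and $\tau(n)\to\infty$ in $P_0$-probability, one obtains joint weak convergence of $(T_{n,c_n}, Z_{n1}, Z_{n2})$ under $P_0$, with the $Z_{n1}$-component independent of the limit of $(T_{n,c_n}, Z_{n2})$.

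Assembling these pieces, the asymptotic covariance of $T_{n,c_n}$ and $L_{n,c_n}(\vartheta_n\mid 0)$ under $P_0$ equals
\begin{equation*}
\frac{\xi B}{(1+\delta)A^{1/2}}\cdot(1+\delta)\int_{-\infty}^\infty x\Phi(x)^\delta\varphi(x)\,dx
=\frac{\xi B}{A^{1/2}}\int_{-\infty}^\infty x\Phi(x)^\delta\varphi(x)\,dx,
\end{equation*}
so LeCam's third lemma shifts the asymptotic $P_{\vartheta_n}$-mean of $T_{n,c_n}$ by precisely this covariance while leaving the variance equal to $1$, which is the claim. The main technical obstacle I anticipate is the rigorous passage from the conditional bivariate CLT (given the random index $\tau(n)$) to the unconditional joint limit in which $Z_{n1}$ decouples from $(T_{n,c_n}, Z_{n2})$; I would handle this by computing conditional characteristic functions given $\tau(n)$ and applying bounded convergence, exploiting that the conditional limit is the same for every diverging sequence of values of $\tau(n)$.
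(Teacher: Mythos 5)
Your proof is correct, but it takes a genuinely different route from the paper. The paper proves the proposition by direct computation under the alternative: it writes down the density $p_{\vartheta_n,c_n}$ of $Y$ from Lemma \ref{lem:density_of_exceedances}, expands $E_{\vartheta_n,c_n}\left(\Phi^{-1}(Y)\right)$ and $E_{\vartheta_n,c_n}\left(\left(\Phi^{-1}(Y)\right)^2\right)$ using \eqref{eqn:expansion_of_density_of_df_of_w} and \ref{eqn:fact_3}, obtains $E_{\vartheta_n,c_n}\left(\Phi^{-1}(Y)\right)=\frac{\xi}{(n\abs{c_n})^{1/2}}\frac{B}{A}\int x\Phi(x)^\delta\varphi(x)\,dx+o\left((n\abs{c_n})^{-1/2}\right)$ and asymptotic variance $1$, and then decomposes $T_{n,c_n}$ into a centered sum (handled by a random--index CLT under $P_{\vartheta_n}$) plus the drift $\tau(n)^{1/2}E_{\vartheta_n,c_n}\left(\Phi^{-1}(Y)\right)\sim\xi\frac{B}{A^{1/2}}\int x\Phi(x)^\delta\varphi(x)\,dx$. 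You instead stay entirely under $P_0$, where the $Y_k$ are exactly uniform, compute the asymptotic covariance of $T_{n,c_n}$ with the central sequence $\frac{\xi B}{(1+\delta)A^{1/2}}(Z_{n1}+Z_{n2})$ (correctly finding that the $Z_{n1}$-part decouples by independence of $\tau(n)$ and the $Y_k$, and that the $Z_{n2}$-part contributes $(1+\delta)\int_0^1\Phi^{-1}(u)u^\delta\,du$), and invoke LeCam's third lemma; contiguity is guaranteed by the $N(-\sigma^2/2,\sigma^2)$ form of the limit in Theorem \ref{thm:lan}. Your route is arguably cleaner in that all moment computations involve exact uniform distributions rather than expansions of $h_{\vartheta_n}$, and it reuses the LAN machinery the paper has already built (indeed the paper itself uses LeCam's third lemma in exactly this way for the optimal tests $\varphi_i$); the price is the joint weak convergence of $\left(T_{n,c_n},Z_{n1},Z_{n2}\right)$ with the random index $\tau(n)$, which you correctly identify as the main technical step and for which your conditional characteristic function argument is adequate. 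The paper's route avoids that joint convergence but must instead control the remainder terms of the density expansion under $\vartheta_n$. Both yield the same limit.
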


\begin{proof}
First we compute the asymptotic mean and variance of $\Phi^{-1}(Y)$ under $\vartheta_n$
and $c_n$ for $n\to\infty$. From Lemma \ref{lem:density_of_exceedances} we obtain that
the density of $Y$ under $\vartheta_n$ is for $0\le u\le 1$ and $c_n\ge c_0$ given by
\begin{align*}
p_{\vartheta_n,c_n}(u)
&=\frac{f_{\vartheta_n,c_n}(u)}{P_{\vartheta_n}(\bfX\ge c_n)}\\
&=\frac{\abs{c_n}}{P_{\vartheta_n}(\bfX\ge c_n)}\int_0^m z h_{\vartheta_n}(\abs{c_n}uz)\, \left(P*\inf_{0\le t\le 1}Z_t\right)(dz).
\end{align*}
From Fubini's theorem and the substitution $u\mapsto \Phi(x)$ we, therefore, obtain
\begin{align*}
&E_{\vartheta_n, c_n}\left(\Phi^{-1}(Y)\right)\\
&=\int_0^1 \Phi^{-1}(u) p_{\vartheta_n,c_n}(u)\,du\\
&=\frac{\abs{c_n}}{P_{\vartheta_n}(\bfX\ge c_n)} \int_0^m z\int_0^1 \Phi^{-1}(u) h_{\vartheta_n}(\abs{c_n}u z)\,du\,\left(P*\inf_{0\le t\le 1}Z_t\right)(dz)\\
&=\frac{\abs{c_n}}{P_{\vartheta_n}(\bfX\ge c_n)} \int_0^m z\int_{-\infty}^\infty x h_{\vartheta_n}(\abs{c_n}\Phi(x)z)\varphi(x)\,dx\left(P*\inf_{0\le t\le 1}Z_t\right)(dz)
\end{align*}
where $\varphi(x)=\Phi'(x)=(2\pi)^{-1/2}\exp(-x^2/2)$, $x\in\R$, is the density of the
standard normal df $\Phi$.

From condition \eqref{eqn:expansion_of_density_of_df_of_w} we obtain the expansion
\begin{align*}
&\int_0^m z\int_{-\infty}^\infty x h_{\vartheta_n}(\abs{c_n}\Phi(x)z)\varphi(x)\,dx\,\left(P*\inf_{0\le t\le 1}Z_t\right)(dz)\\
&=\int_0^m z \int_{-\infty}^\infty  x\left(1+ \vartheta_n\left(\abs{c_n}\Phi(x) z\right)^\delta + r_{\vartheta_n}\left(\abs{c_n}\Phi(x) z\right)\right)\varphi(x)\,dx\left(P*\inf_{0\le t\le 1}Z_t\right)(dz)\\
&=\vartheta_n \abs{c_n} B \int_{-\infty}^\infty x \left(\Phi(x)\right)^\delta \varphi(x)\,dx + o\left(\vartheta_n\abs{c_n}^\delta\right).
\end{align*}

From  \ref{eqn:fact_1} and \ref{eqn:fact_3} we obtain
\begin{align*}
P_{\vartheta_n}(\bfX\ge c_n)&=P_0(\bfX\ge c_n)+ \abs{c_n}^{1+\delta} \frac{\vartheta_n}{1+\delta} + o\left(\vartheta_n\abs{c_n}^{1+\delta}\right)\\
&=\abs{c_n} A + \abs{c_n}^{1+\delta} \frac{\vartheta_n}{1+\delta} B +  o\left(\vartheta_n\abs{c_n}^{1+\delta}\right)
\end{align*}
and, thus,

\begin{align*}
&E_{\vartheta_n, c_n}\left(\Phi^{-1}(Y)\right)\\
&=\frac{\vartheta_n \abs{c_n}^\delta B \int_{-\infty}^\infty x\left(\Phi(x)\right)^\delta\varphi(x)\,dx + o\left(\vartheta_n\abs{c_n}^\delta\right)}
{A + \abs{c_n}^\delta \frac{\vartheta_n}{1+\delta} B + o\left(\vartheta_n\abs{c_n}^\delta\right)}\\
&=\frac{\frac \xi{(n\abs{c_n})^{1/2}} B \int_{-\infty}^\infty x\left(\Phi(x)\right)^\delta\varphi(x)\,dx + o\left(\vartheta_n\abs{c_n}^\delta\right)}
{A + \abs{c_n}^\delta \frac{\vartheta_n}{1+\delta} B + o\left(\vartheta_n\abs{c_n}^\delta\right)}\\
&= \frac \xi{(n\abs{c_n})^{1/2}} \frac B A \int_{-\infty}^\infty x\left(\Phi(x)\right)^\delta\varphi(x)\,dx + o\left(\frac 1{(n\abs{c_n})^{1/2}}\right)
\end{align*}

Equally, we obtain
\begin{align*}
&E_{\vartheta_n, c_n}\left(\left(\Phi^{-1}(Y)\right)^2\right)\\
&=\frac{\abs{c_n}}{P_{\vartheta_n}(\bfX\ge c_n)} \int_0^m z\int_{-\infty}^\infty x^2 h_{\vartheta_n}(\abs{c_n}\Phi(x)z)\varphi(x)\,dx\left(P*\inf_{0\le t\le 1}Z_t\right)(dz)\\
&=\frac{\abs{c_n}}{P_{\vartheta_n}(\bfX\ge c_n)} \int_0^m z\int_{-\infty}^\infty x^2 \left(1+ \vartheta_n\left(\abs{c_n} \Phi(x) z\right)^\delta + r_{\vartheta_n}\left(\abs{c_n} \Phi(x) z\right)\right)\times{} \\
&\tab{=\frac{\abs{c_n}}{P_{\vartheta_n}(\bfX\ge c_n)} \int_0^m z\int_{-\infty}^\infty x^2 \Big(1+ \vartheta_n(\abs{c_n}}
    \times \varphi(x)\,dx\left(P*\inf_{0\le t\le 1}Z_t\right)(dz)\\
&\sim 1
\end{align*}
and, thus, the asymptotic variance of $\Phi^{-1}(Y)$ is under $\vartheta_n$ and $c_n$ for $n\to\infty$ equivalent to $1$.

Finally we have the expansion
\begin{align*}
E_{\vartheta_n,c_n}(\tau(n))&=nP_{\vartheta_n}(\bfX\ge c_n)\\
&= n\abs{c_n} A + (n\abs{c_n})^{1/2} \frac B{1+\delta} + o\left(n\abs{c_n}^{1+\delta}\right)\\
&= n\abs{c_n}A (1+o(1)).
\end{align*}

Now we can compute the asymptotic distribution of $T_{n,c_n}$ under $\vartheta_n$. We have
\begin{align*}
T_{n,c_n}&=\frac 1{\tau(n)^{1/2}} \sum_{k=1}^{\tau(n)}\Phi^{-1}(Y_k)\\
&=\frac 1{\tau(n)^{1/2}} \sum_{k=1}^{\tau(n)}\left(\Phi^{-1}(Y_k)- E_{\vartheta_n,c_n} \left(\Phi^{-1}(Y)\right)\right) + \tau(n)^{1/2} E_{\vartheta_n,c_n} \left(\Phi^{-1}(Y)\right),
\end{align*}
where the first term is by a suitable version of the central limit theorem asymptotically
standard normal distributed, and
\begin{align*}
\tau(n)^{1/2} E_{\vartheta_n,c_n} \left(\Phi^{-1}(Y)\right) &\sim E_{\vartheta_n,c_n}\left(\tau(n)^{1/2}\right) E_{\vartheta_n,c_n} \left(\Phi^{-1}(Y)\right)\\
&\sim ( n\abs{c_n}A)^{1/2}  \frac \xi{(n\abs{c_n})^{1/2}} \frac B A \int_{-\infty}^\infty x\left(\Phi(x)\right)^\delta\varphi(x)\,dx\\
&\sim \xi \frac B {A^{1/2}} \int_{-\infty}^\infty x\left(\Phi(x)\right)^\delta\varphi(x)\,dx,
\end{align*}
which completes the proof.
\end{proof}

From Proposition \ref{prop:asymptotic_distribution_of_alternative_test_statistic} we obtain that
\begin{equation*}
\varphi^*_1\left(N_{n,c_n}\right):= 1_{(u_\alpha,\infty)}\left(T_{n,c_n}\right), \quad
\varphi^*_2\left(N_{n,c_n}\right):= 1_{(-\infty,-u_\alpha)}\left(T_{n,c_n}\right)
\end{equation*}
are one-sided tests for testing $\vartheta>0$ and $\vartheta<0$, respectively, against $0$.
Their asymptotic power functions are given by
\begin{equation} \label{eqn:power_function_of_omnibus_test}
\begin{aligned}
\beta(\xi)&:=\lim_{n\to\infty}E_{\vartheta_n(\xi)}\left(\varphi^*_i\left(N_{n,c_n} \right)\right) \\
&=1-\Phi\left(u_\alpha-\abs\xi \frac B {A^{1/2}}\int_{-\infty}^\infty x (\Phi(x))^\delta \varphi(x)\,dx\right),\qquad \xi\in\R.
\end{aligned}
\end{equation}

The asymptotic relative efficiency of $\varphi^*_i\left(N_{n,c_n}\right)$ with respect to $\varphi_i\left(N_{n,c_n}\right)$ is, by \eqref{eqn:power_function_of_optimal test} and \eqref{eqn:power_function_of_omnibus_test}, given by the ratio
\begin{equation*}
\frac{\left(\abs\xi B\int_{-\infty}^\infty x (\Phi(x))^\delta \varphi(x)\,dx/A^{1/2}\right)^2}
{\left(\abs\xi B/\left(A^{1/2}(2\delta+1)^{1/2}\right)\right)^2}
= (2\delta+1)\left(\int_{-\infty}^\infty x (\Phi(x))^\delta \varphi(x)\,dx\right)^2,
\end{equation*}
which is independent of $\xi$.

Denote by
$k_n:=\min\set{k\in\N:\,E_{\vartheta_n(\xi)}\left(\varphi_i^*\left(N_{k,c_k}\right)\right)\ge
E_{\vartheta_n(\xi)}\left(\varphi_i\left(N_{n,c_n}\right)\right)}$ the least sample size, for which
$\varphi_i^*\left(N_{k_n,c_{k_n}}\right)$ is, at $\vartheta_n(\xi)$, at least as good as
$\varphi_i\left(N_{n,c_n}\right)$. The \emph{relative efficiency} of
$\varphi_i^*\left(N_{k_n,c_{k_n}}\right)$ with respect to $\varphi_i\left(N_{n,c_n}\right)$
is then defined as $n/k_n$. From \eqref{eqn:power_function_of_optimal test} and
\eqref{eqn:power_function_of_omnibus_test} we obtain that
\begin{equation} \label{eqn:asymptotic_relative_efficiency}
\lim_{n\to\infty} \frac{n \abs{c_n}^{1+2\delta}}{k_n \abs{c_{k_n}}^{1+2\delta}}= (2\delta+1)\left(\int_{-\infty}^\infty x (\Phi(x))^\delta \varphi(x)\,dx\right)^2,
\end{equation}
see Section 10.2 in \citet{pfanz94} for the underlying reasoning. This explains the
significance of the asymptotic relative efficiency defined above.

\begin{figure}
  \input{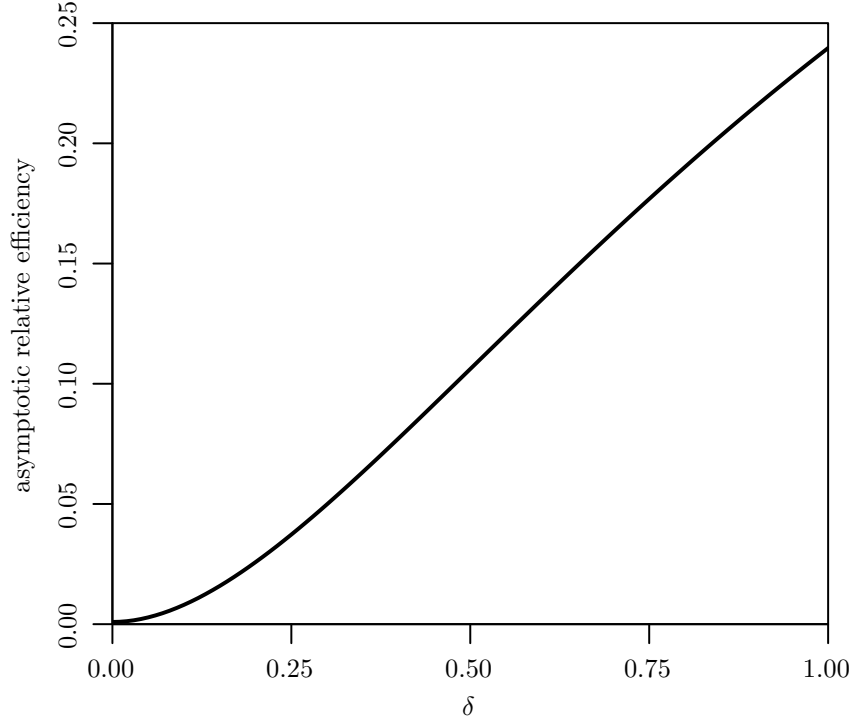}
  \caption{Asymptotic relative efficiency as in \eqref{eqn:asymptotic_relative_efficiency}.}
\end{figure}

\section{Testing in an exponential family model} \label{sec:exponential_family}

In this section we assume that the distribution of $W$ belongs to an exponential family
given by the probability densities on the interval $\unit$
\begin{equation*}
h_\vartheta(u)=C(\vartheta) \exp(\vartheta T(u)),\qquad 0\le u\le 1,\,\vartheta\in\R,
\end{equation*}
where $T:[0,1]\to\R$ is a bounded Borel-measurable function satisfying
\begin{equation*} %\label{cond:limit_of_T_u_exists}
\lim_{u\downarrow 0} T(u)=:C\in\R,
\end{equation*}
and $C(\vartheta)$ is defined by
\begin{equation*}
C(\vartheta):=\frac 1 {\int_0^1\exp(\vartheta T(u))\,du},\qquad \vartheta\in\R.
\end{equation*}

\begin{remark} \label{rem:density_of_supX_t_c_for_exponential_family} %\upshape
From the arguments in the proof of Lemma \ref{lem:density_of_exceedances} we obtain
that the rv $\sup_{0\le t\le 1}\left(X_t/c\right)$ has for $c<0$ close to zero and each $\vartheta\in\R$
on $\unit$ the Lebesgue-density
\begin{equation*}
f_{\vartheta,c}(u)=\abs c\int_0^mz h_\vartheta(\abs c zu) {\PinfZdz},\qquad 0\le u\le 1.
\end{equation*}
\end{remark}

In what follows we put with arbitrary $\xi\in\R$
\begin{equation*}
\vartheta_n:=\vartheta_n(\xi):=\frac \xi{(n\abs{c_n})^{1/2} A^{1/2} \left(C - \int_0^1T(u)\,du\right)},
\end{equation*}
where we require that $C\not=\int_0^1T(u)\,du$.

\begin{theorem} \label{thm:lan_exponential_family}
Suppose that $\abs{c_n}\to 0$, $n\abs{c_n}\to\infty$ as $n\to\infty$. Then we obtain for
the loglikelihood ratio in \eqref{eqn:loglikelihood_ratio} the expansion
\begin{equation*}
L_{n,c_n}(\vartheta_n\mid 0)=\xi Z_{n1}-\frac {\xi^2}2 + o_{P_0}(1).
\end{equation*}
\end{theorem}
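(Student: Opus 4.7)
The plan is to decompose $L_{n,c_n}(\vartheta_n\mid 0)$ as in \eqref{eqn:loglikelihood_ratio} into the sum-over-exceedances term (call it \emph{Part~A}) and the two $\tau(n)$-driven terms (\emph{Parts~B and C}). Parts~B and~C will be handled exactly as in Theorem~\ref{thm:lan} and yield $\xi Z_{n1}-\xi^2/2+o_{P_0}(1)$. The new phenomenon, which accounts for the fact that only the number of exceedances feeds into the central sequence, is that Part~A is $o_{P_0}(1)$: the actual values of the $Y_k$ carry no asymptotic information.

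First I would record the density expansion. From $C(0)=1$ and $C'(0)=-\int_0^1 T(u)\,du$ one obtains $h_\vartheta(u)=1+\vartheta\bigl(T(u)-\int_0^1 T(s)\,ds\bigr)+O(\vartheta^2)$ uniformly in $u\in[0,1]$ (using boundedness of $T$). Plugging this into the formula of Remark~\ref{rem:density_of_supX_t_c_for_exponential_family} and using $\lim_{s\downarrow 0}T(s)=C$ with dominated convergence gives
\begin{equation*}
  f_{\vartheta,c}(u)=\abs{c}A+\abs{c}\vartheta A\,g_n(u)+O(\abs{c}\vartheta^2),
\end{equation*}
where $g_n(u):=A^{-1}\int_0^m z\bigl(T(\abs{c}zu)-\int_0^1 T(s)\,ds\bigr)\bigl(P*\inf_{t\in[0,1]}Z_t\bigr)(dz)$ satisfies $g_n(u)\to C-\int_0^1 T$ for each $u\in(0,1]$. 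Integrating and using the specific normalisation of $\vartheta_n$ yields
\begin{equation*}
  \Delta_n:=\frac{P_{\vartheta_n}(\bfX\ge c_n)-P_0(\bfX\ge c_n)}{P_0(\bfX\ge c_n)}=\frac{\xi}{(n\abs{c_n}A)^{1/2}}+o\bigl((n\abs{c_n})^{-1/2}\bigr).
\end{equation*}

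Parts~B and C are then routine: the two-term Taylor expansion of $\log(1+\Delta_n)$ and of $\log\bigl(1-P_0\Delta_n/(1-P_0)\bigr)$, combined with $\tau(n)-nP_0(\bfX\ge c_n)=O_{P_0}(\sqrt{n\abs{c_n}A})$ and the law of large numbers, reproduces verbatim the computation at the end of the proof of Theorem~\ref{thm:lan} and gives $\xi Z_{n1}-\xi^2/2+o_{P_0}(1)$.

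The heart of the argument is Part~A. Under $P_0$ the $Y_k$ are i.i.d.\ uniform on $(0,1)$, and $p_{\vartheta_n,c_n}(y):=f_{\vartheta_n,c_n}(y)/P_{\vartheta_n}(\bfX\ge c_n)$ has the form $p_{\vartheta_n,c_n}(y)=1+\vartheta_n\tilde g_n(y)+R_n(y)$, with $\tilde g_n(y):=g_n(y)-\int_0^1 g_n$, $R_n(y)=O(\vartheta_n^2)$ uniformly in $y$, and---crucially---$\int_0^1\tilde g_n(y)\,dy=\int_0^1 R_n(y)\,dy=0$ because both $p_{\vartheta_n,c_n}$ and $1$ integrate to $1$. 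Pointwise in $y\in(0,1]$, $\tilde g_n(y)\to 0$; since $\tilde g_n$ is uniformly bounded, dominated convergence yields $\mathrm{Var}_{P_0}(\tilde g_n(Y_1))=\int_0^1\tilde g_n^2\to 0$. Expanding $\log(1+x)=x-x^2/2+O(x^3)$ and summing over $k\le\tau(n)$, the linear pieces $\vartheta_n\sum_k\tilde g_n(Y_k)$ and $\sum_k R_n(Y_k)$ are mean zero under $P_0$ with variances $\tau(n)\vartheta_n^2\mathrm{Var}_{P_0}(\tilde g_n(Y_1))=O(1)\cdot o(1)$ and $\tau(n)O(\vartheta_n^4)=O((n\abs{c_n})^{-1})$ respectively, while the quadratic $\tfrac{\vartheta_n^2}{2}\sum_k\tilde g_n(Y_k)^2$ and higher remainders are of order $O(\tau(n)\vartheta_n^2)\cdot o(1)+O(\tau(n)\vartheta_n^3)=o_{P_0}(1)$. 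The main obstacle is exactly this last step: a naive bound on the $O(\vartheta_n^2)$ remainder gives only $O(\tau(n)\vartheta_n^2)=O(1)$, and the proof hinges on the mean-zero identity $\int_0^1 R_n=0$---a direct consequence of $p_{\vartheta_n,c_n}$ being a probability density---to let this contribution cancel in mean.
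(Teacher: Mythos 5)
Your proof is correct and follows essentially the same route as the paper: the same expansions of $C(\vartheta_n)$, of $f_{\vartheta_n,c_n}$ and of $P_{\vartheta_n}(\bfX\ge c_n)$ (the paper's Facts 5--7), the binomial part handled verbatim as in Theorem~\ref{thm:lan}, and the exceedance sum shown to be $o_{P_0}(1)$ by a mean-zero-plus-vanishing-variance argument resting, exactly as in the paper, on $T(\abs{c_n}zu)\to C$, boundedness of $T$ and dominated convergence. The only difference is bookkeeping: you expand $\log p_{\vartheta_n,c_n}(Y_k)$ directly with the centered score $\tilde g_n$, so the linear and quadratic contributions are each individually negligible, whereas the paper expands $\log\left(f_{\vartheta_n,c_n}/f_{0,c_n}\right)$ and $\log\left(P_{\vartheta_n}(\bfX\ge c_n)/P_{0}(\bfX\ge c_n)\right)$ separately and cancels the two quadratic terms $\mathrm{II}_n,\mathrm{III}_n\to\xi^2/2$ against each other.
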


\begin{proof}
Again we compile several facts first.
\begin{equation*} \tag{Fact 5} \label{eqn:fact_5}
C(\vartheta_n)=1- \vartheta_n\int_0^1 T(u)\,du + o(\vartheta_n),\qquad n\in\N.
\end{equation*}

This follows from the expansion $\exp(x)=1+x+o(x)$ as $x\to 0$:
\begin{align*}
C(\vartheta_n)&=\frac 1 {\int_0^1\exp(\vartheta T(u))\,du}\\
&=\frac 1 {\int_0^1 1+\vartheta_n T(u) + o(\vartheta_n)\,du}\\
&= \frac 1 {1+\vartheta_n\int_0^1  T(u)\,du + o(\vartheta_n)}\\
&= 1 - \vartheta_n\int_0^1  T(u)\,du + o(\vartheta_n).
\end{align*}

\begin{align*}
&P_{\vartheta_n}(\bfX\ge c_n)- P_0(\bfX\ge c_n) \tag{Fact 6} \label{eqn:fact_6}\\
&=\vartheta_n\abs{c_n}A\left(C -\int_0^1 T(u)\,du\right)+ o(\vartheta_n\abs{c_n})\\
&=\left(\frac{\abs{c_n}} n\right)^{1/2}A^{1/2}\xi + o\left(\left(\frac{\abs{c_n}} n\right)^{1/2}\right).
\end{align*}
This can be seen as follows. From Remark
\ref{rem:density_of_supX_t_c_for_exponential_family} and \ref{eqn:fact_5} we obtain
\begin{align*}
&P_{\vartheta_n}(\bfX\ge c_n) - P_0(\bfX\ge c_n)\\
&= \int_0^1 f_{\vartheta_n,c_n}(u)\,du - \int_0^1 f_{0.c_n}(u)\,du\\
&=\abs{c_n} \int_0^m z \int_0^1 h_{\vartheta_n}(\abs{c_n} z u) - 1\,du \PinfZdz\\
&= \abs{c_n} \int_0^m z \int_0^1 C(\vartheta_n)\exp\left(\vartheta_n T(\abs{c_n}zu)\right) - 1\, du \PinfZdz\\
&= \abs{c_n} \int_0^m z \int_0^1\left(1-\vartheta_n\int_0^1 T(x)\,dx+ o(\vartheta_n)\right) (1+\vartheta_n C+o(\vartheta_n))- 1\,du\\
&\tab{= \abs{c_n} \int_0^m z \int_0^1\left(1-\vartheta_n\int_0^1 T(x)\,dx+ o(\vartheta_n)\right) (1+\vartheta}
    \PinfZdz\\
&=\abs{c_n}\vartheta_n\left(C - \int_0^1T(x)\,dx\right) A+ o(\abs{c_n}\vartheta_n),
\end{align*}
which is \ref{eqn:fact_6}.

\ref{eqn:fact_6} together with \ref{eqn:fact_1} yields
\begin{equation*} \tag{Fact 7} \label{eqn:fact_7}
\frac{P_{\vartheta_n}(\bfX\ge c_n)- P_0(\bfX\ge c_n)}{P_0(\bfX\ge c_n)}= \frac 1 {(n\abs{c_n})^{1/2}} \frac \xi{A^{1/2}} + o\left(\frac 1  {(n\abs{c_n})^{1/2}}\right).
\end{equation*}

Repeating the arguments in the proof of Theorem \ref{thm:lan} one shows that
\begin{align*}
&\tau(n) \log\left(\frac{P_{\vartheta_n}(\bfX\ge c_n)}{P_0(\bfX\ge c_n)}\right) + (n-\tau(n)) \log\left(\frac{1-P_{\vartheta_n}(\bfX\ge c_n)}{1-P_0(\bfX\ge c_n)}\right)\\
&=\xi \frac{\tau(n) - n P_0(\bfX\ge c_n)}{(n\abs{c_n}A)^{1/2}} - \frac{\xi^2}2 + o_{P_0}(1)\\ &=\xi Z_{n1} - \frac{\xi^2}2 + o_{P_0}(1).
\end{align*}

It remains to show that
\begin{equation} \label{eqn:observations_vanish_asymptotically}
\sum_{k\le\tau(n)}\log\left(\frac{f_{\vartheta_n,c_n}(Y_k)} {f_{0,c_n}(Y_k)} \frac{P_0(\bfX\ge c_n)} {P_{\vartheta_n}(\bfX\ge c_n)}\right) = o_{P_0}(1).
\end{equation}

Repeating the arguments in the proof of \ref{eqn:fact_6} we obtain
\begin{align*}
&\frac{f_{\vartheta_n,c_n}(u) - f_{0,c_n}(u)}{f_{0,c_n}(u)}\\
&=\frac 1 A \int_0^m z\left(h_\vartheta(\abs{c_n}zu) - 1\right)\PinfZdz\\
&=\frac 1 A \int_0^m z\left( C(\vartheta_n)\exp(\vartheta_n T(\abs{c_n} z u))- 1\right) \PinfZdz\\
&=\frac 1 A \int_0^m z\left(\vartheta_n C - \vartheta_n \int_0^1 T(x)\,dx + o(\vartheta_n)\right)\PinfZdz\\
&=O(\vartheta_n)
\end{align*}
uniformly for $u\in\unit$ and $n\in\N$. The expansion
$\log(1+\varepsilon)=\varepsilon-\varepsilon^2/2+ O\left(\varepsilon^2\right)$ for $\varepsilon\to
0$ together with \ref{eqn:fact_7}, thus, yields,
\begin{align*}
&\sum_{k\le\tau(n)}\log\left(\frac{f_{\vartheta_n,c_n}(Y_k)} {f_{0,c_n}(Y_k)} \frac{P_0(\bfX\ge c_n)} {P_{\vartheta_n}(\bfX\ge c_n)}\right)\\
&=\sum_{k\le\tau(n)}\log\left(1 + \frac{f_{\vartheta_n,c_n}(Y_k)-f_{0,c_n}(Y_k)}{f_{0,c_n}(Y_k)}  \right) \\
&\tab{=\sum}
    -\tau(n) \log\left(1 + \frac{P_{\vartheta_n}(\bfX\ge c_n) - P_0(\bfX\ge c_n)} {P_0(\bfX\ge c_n)}\right)\\
&=\sum_{k\le\tau(n)}\left( \frac{f_{\vartheta_n,c_n}(Y_k)-f_{0,c_n}(Y_k)}{f_{0,c_n}(Y_k)} - \frac12 \left( \frac{f_{\vartheta_n,c_n}(Y_k)-f_{0,c_n}(Y_k)}{f_{0,c_n}(Y_k)}\right)^2\right)\\
&\tab{=\sum}
    -\tau(n) \frac{P_{\vartheta_n}(\bfX\ge c_n) - P_0(\bfX\ge c_n)} {P_0(\bfX\ge c_n)} +\frac{\tau(n)}2 \left( \frac{P_{\vartheta_n}(\bfX\ge c_n) -P_0(\bfX\ge c_n)} {P_0(\bfX\ge c_n)}\right)^2\\
&\tab{=\sum}
    + O_{P_0}\left(\frac 1 {n\abs{c_n}^{1/2}}\right).
\end{align*}

Note that
\begin{equation*}
E_{P_0}\left(f_{\vartheta_n,c_n}(Y)\right) = \int_0^1 f_{\vartheta_n,c_n}(u)\,du=P_{\vartheta_n}(\bfX\ge c_n)
\end{equation*}
and
\begin{equation*}
f_{0,c_n}(u)=\abs{c_n} A = P_0(\bfX\ge c_n).
\end{equation*}
We, thus, obtain
\begin{align*}
&\sum_{k\le\tau(n)}\log\left(\frac{f_{\vartheta_n,c_n}(Y_k)} {f_{0,c_n}(Y_k)} \frac{P_0(\bfX\ge c_n)} {P_{\vartheta_n}(\bfX\ge c_n)}\right)\\
&= \sum_{k\le\tau(n)} \frac{f_{\vartheta_n,c_n}(Y_k) - P_{\vartheta_n}(\bfX\ge c_n)} {\abs{c_n} A} %\\
%&\tab{=\sum}
    - \frac 12 \sum_{k\le\tau(n)} \left(\frac{f_{\vartheta_n,c_n}(Y_k) - P_{\vartheta_n}(\bfX\ge c_n)} {\abs{c_n} A}\right)^2\\
&\tab{=}
    + \frac{\tau(n)}2 \left( \frac{P_{\vartheta_n}(\bfX\ge c_n) -P_0(\bfX\ge c_n)} {P_0(\bfX\ge c_n)}\right)^2 + O_{P_0}\left(\frac 1 {n\abs{c_n}^{1/2}}\right)\\
&=: \mathrm{I}_n -\mathrm{II}_n + \mathrm{III}_n + O_{P_0}\left(\frac 1 {n\abs{c_n}^{1/2}}\right).
\end{align*}

From \ref{eqn:fact_7} we obtain
\begin{equation} \label{eqn:expansion_of_III_n}
\mathrm{III}_n  \sim \frac{n\abs{c_n} A}2 \left( \frac 1{(n\abs{c_n})^{1/2}} \frac \xi{A^{1/2}} + o\left(\frac 1{(n\abs{c_n})^{1/2}}\right)\right)^2 \sim \frac{\xi^2} 2.
\end{equation}

Next we show that $\mathrm{I}_n =o_{P_0}(1)$. This assertion follows, if we show that
\begin{equation} \label{eqn:expansion_of_I_n}
E_{P_0}\left(\left(\frac {f_{\vartheta_n,c_n}(Y_k) - P_{\vartheta_n}(\bfX\ge c_n)} {\abs{c_n} A}\right)^2\right) = o\left(\frac1 {n\abs{c_n}}\right).
\end{equation}

By elementary arguments we obtain
\begin{align*}
&E_{P_0}\left(\left(\frac {f_{\vartheta_n,c_n}(Y_k) - P_{\vartheta_n}(\bfX\ge c_n)} {\abs{c_n} A}\right)^2\right)\\
&=\frac 1 {c_n^2A^2} E_{P_0}\left(\left( \int_0^m z \int_0^1 h_{\vartheta_n}(\abs{c_n} z Y)- h_{\vartheta_n}(\abs{c_n} z u) \PinfZdz\right)^2\right)\\
&= \frac {C(\vartheta_n)^2} {A^2}  E_{P_0}\left(\left( \int_0^m z \int_0^1 \exp(\vartheta_n T(\abs{c_n}z Y))- \exp(\vartheta_n T(\abs{c_n}z u))\,du\right.\right.\\
&\tab{= \frac {C(\vartheta_n)^2} {A^2}  E_{P_0}\Biggl(\biggl( \int_0^m z \int_0^1 \exp(\vartheta_n T(\abs{c_n}z Y))- \exp(}
    \PinfZdz\biggr)^2\Biggr)\\
&= o(\vartheta_n^2)
\end{align*}
which is \eqref{eqn:expansion_of_I_n}.

Finally we have
\begin{align*}
&E_{P_0}\left(\left(\frac{f_{\vartheta_n,c_n}(Y)-\abs{c_n}A}{\abs{c_n}A}\right)^2\right)\\
&= \frac 1 {A^2} E_{P_0}\left(\left(\int_0^m z (h_{\vartheta_n}(\abs{c_n}zY) - 1)\PinfZdz\right)^2\right)\\
&= \frac 1 {A^2} E_{P_0}\left(\left(\int_0^m z (C(\vartheta_n)\exp(\vartheta_nT(\abs{c_n}zY)) - 1)\PinfZdz\right)^2\right)\\
&= \frac 1 {A^2} E_{P_0}\left(\left(\int_0^m z \left(\left(1-\vartheta_n \int_0^1 T(u)\,du + o(\vartheta_n)\right)(1+\vartheta_nT(\abs{c_n}z Y)+o(\vartheta_n))-1\right)\right.\right.\\
&\tab{= \frac 1 {A^2} E_{P_0}\Biggl(\biggl(\int_0^m z \biggl(\left(1-\vartheta_n \int_0^1 T(u)\,du + o(\vartheta_n)\right)(1+\vartheta}
    \PinfZdz\biggr)^2\Biggr)\\
&= \vartheta_n^2 \left(C-\int_0^1 T(u)\,du\right)^2 + o(\abs{c_n}^2\vartheta_n^2).
\end{align*}

The law of large numbers implies
\begin{equation*}
\mathrm{II}_n\to_{n\to\infty} -\frac{\xi^2}2
\end{equation*}
in probability, and, hence, \eqref{eqn:expansion_of_III_n} yields
\begin{equation*}
\mathrm{III}_n - \mathrm{II}_n = o_{P_0}(1).
\end{equation*}

We, thus, have established \eqref{eqn:observations_vanish_asymptotically}, which
completes the proof of Theorem \ref{thm:lan_exponential_family}.
\end{proof}

The test statistic
\begin{equation*}
\phi_1(N_{n,c_n}):=1_{(u_\alpha,\infty)}(Z_{n1})
\end{equation*}
defines by the Neyman-Pearson lemma an asymptotically optimal level-$\alpha$ test, based
on $N_{n,c_n}$, for the null-hypothesis $\vartheta=0$ against the sequence of alternatives
$\vartheta_n=\vartheta_n(\xi)=\xi/\left((n\abs{c_n})^{1/2} A^{1/2}\left(C-\int_0^1
T(u)\,du\right)\right)$ with $\xi>0$. As $\phi_1(N_{n,c_n})$ does not depend on $\xi$, this
test is asymptotically optimal uniformly in $\xi>0$.

The corresponding uniformly optimal test for $\vartheta=0$ against $\vartheta_n(\xi)$ with
$\vartheta<0$ is
\begin{equation*}
\phi_2(N_{n,c_n}):=1_{(-\infty,-u_\alpha)}(Z_{n1}).
\end{equation*}

From LeCam's third lemma we obtain that under $\vartheta_n=\vartheta_n(\xi)$
\begin{equation*}
L_{n,c_n}(\vartheta_n\mid 0) = \xi Z_{n1} - \frac{\xi^2}2 + o_{P_n}(1) \to_{D_{\vartheta_n}} N\left(\frac{\xi^2}2,\xi^2\right),
\end{equation*}
with
\begin{equation*}
Z_{n1} \to_{D_{\vartheta_n}} N(\xi,1).
\end{equation*}
The asymptotic power functions of $\phi_i$, $i=1,2$, are, consequently, given by
\begin{equation*}
\lim_{n\to\infty} E_{P_{\vartheta_n}}\left(\phi_i(N_{n,c_n})\right)=1-\Phi(u_\alpha-\abs\xi),\qquad i=1,2.
\end{equation*}

Next we compute the performance of the statistic $T_{n,c}=\tau(n)^{-1/2}
\sum_{k=1}^{\tau(n)} \Phi^{-1}(Y_k)$ for the testing problem $\vartheta=0$ against
$\vartheta_n(\xi)$.

\begin{lemma} \label{lem:mean_and_variance_of_omnibus_test_statistic}
We have
\begin{equation*}
E_{\vartheta_n,c_n}\left(\Phi^{-1}(Y)\right) = o(\vartheta_n),\quad Var_{\vartheta_n,c_n}\left(\Phi^{-1}(Y)\right)=1+o(\vartheta_n^2).
\end{equation*}
\end{lemma}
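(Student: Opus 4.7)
By Remark~\ref{rem:density_of_supX_t_c_for_exponential_family}, the density $p_{\vartheta_n,c_n}$ of $Y$ under $\vartheta_n$ equals $\abs{c_n}/P_{\vartheta_n}(\bfX\ge c_n)$ times $\int_0^m z\,h_{\vartheta_n}(\abs{c_n}zu)\PinfZdz$. First I would expand the exponential-family density as
\[
h_{\vartheta_n}(\abs{c_n}zu) = 1 + \vartheta_n\bigl(T(\abs{c_n}zu) - \bar T\bigr) + O(\vartheta_n^2), \qquad \bar T := \int_0^1 T(v)\,dv,
\]
by combining \ref{eqn:fact_5} with $\exp(x)=1+x+O(x^2)$; this is uniform in $z\in[0,m]$ and $u\in\unit$ since $T$ is bounded. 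Together with the matching denominator expansion from \ref{eqn:fact_6}, this yields
\[
p_{\vartheta_n,c_n}(u) = 1 + \vartheta_n\,\psi_n(u) + O(\vartheta_n^2)
\]
with $\psi_n(u):=A^{-1}(g_n(u)-G_n)$, where $g_n(u):=\int_0^m z\bigl(T(\abs{c_n}zu)-\bar T\bigr)\PinfZdz$ and $G_n:=\int_0^1 g_n(u)\,du$. By construction $\int_0^1 \psi_n(u)\,du=0$; $\psi_n$ is uniformly bounded; and because $T(w)\to C$ as $w\downarrow 0$, dominated convergence forces $g_n(u),\,G_n \to A(C-\bar T)$, so that $\psi_n(u)\to 0$ pointwise on $\unit*$.

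For the mean, the substitution $u=\Phi(x)$ converts $E_{\vartheta_n,c_n}\left(\Phi^{-1}(Y)\right)$ into $\int_{-\infty}^\infty x\,p_{\vartheta_n,c_n}(\Phi(x))\varphi(x)\,dx$. The leading summand $\int x\,\varphi(x)\,dx$ vanishes by symmetry, and the $\vartheta_n$-contribution $\vartheta_n\int x\,\psi_n(\Phi(x))\varphi(x)\,dx$ is $o(\vartheta_n)$ by dominated convergence, the integrand tending to $0$ pointwise and being dominated by a constant multiple of $\abs{x}\varphi(x)$. The remainder $O(\vartheta_n^2)$ is in particular $o(\vartheta_n)$, whence $E_{\vartheta_n,c_n}\left(\Phi^{-1}(Y)\right) = o(\vartheta_n)$, as claimed.

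The variance follows from the analogous second-moment computation: $E_{\vartheta_n,c_n}\left(\Phi^{-1}(Y)^2\right) = 1 + \vartheta_n\int x^2\,\psi_n(\Phi(x))\varphi(x)\,dx + O(\vartheta_n^2)$, where the $\vartheta_n$-coefficient again tends to zero by dominated convergence; subtracting $\bigl(E_{\vartheta_n,c_n}\left(\Phi^{-1}(Y)\right)\bigr)^2 = o(\vartheta_n^2)$ yields the variance expansion. The main obstacle I anticipate is sharpening the remainder to obtain the strong rate $o(\vartheta_n^2)$ rather than merely $o(\vartheta_n)$ in the variance; this demands carrying the expansions of $C(\vartheta_n)\exp(\vartheta_n T)$ and of $P_{\vartheta_n}(\bfX\ge c_n)^{-1}$ to second order in $\vartheta_n$ and verifying, via the symmetry identities $\int x\,\varphi=0$ and $\int(x^2-1)\varphi=0$, that the leading second-order contributions of numerator and denominator cancel. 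Everything else reduces to routine bookkeeping based on the boundedness of $T$.
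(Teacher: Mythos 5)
Your plan follows essentially the same route as the paper: expand $C(\vartheta_n)\exp(\vartheta_n T(\cdot))$ to first order in $\vartheta_n$ using \ref{eqn:fact_5} and the boundedness of $T$, exploit $T(w)\to C$ as $w\downarrow 0$ (your $\psi_n\to 0$ with dominated convergence is a cleaner formulation of what the paper writes directly as the factor $1+\vartheta_n C+o(\vartheta_n)$), substitute $u=\Phi(x)$, and remove the leading terms via $\int x\,\varphi(x)\,dx=0$ together with the normalization coming from $P_{\vartheta_n}(\bfX\ge c_n)$. The mean computation and the second-moment computation are correct and match the paper's.

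One caveat concerning the obstacle you flag at the end: the second-order cancellation you hope for cannot deliver the rate $o(\vartheta_n^2)$ for the variance under the stated hypotheses. Since $\int_0^1\psi_n(u)\,du=0$, the first-order contribution to the second moment is $\vartheta_n\int_{-\infty}^{\infty}(x^2-1)\psi_n(\Phi(x))\varphi(x)\,dx$, which tends to zero only as fast as $\psi_n$ does, i.e., as fast as $T(\abs{c_n}zu)$ approaches $C$ — a rate the assumptions do not quantify. So this term is $o(\vartheta_n)$ but in general not $o(\vartheta_n^2)$, and no amount of second-order bookkeeping changes that. The paper's own proof in fact concludes only $Var_{\vartheta_n,c_n}\left(\Phi^{-1}(Y)\right)=1+o(\vartheta_n)$ in its final line (the $o(\vartheta_n^2)$ in the lemma statement is an overstatement), and the weaker rate is all that is needed for the subsequent conclusion that $T_{n,c_n}$ cannot detect the alternative $\vartheta_n(\xi)$; so you should not spend effort chasing $o(\vartheta_n^2)$.
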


As $T_{n,c_n}\to_{D_{\vartheta_n,c_n}} N(0,1)$, Lemma
\ref{lem:mean_and_variance_of_omnibus_test_statistic} implies that the test statistic
$T_{n,c_n}$ is not capable to detect the alternative $\vartheta_n=\vartheta_n(\xi)$.

\begin{proof}
We have
\begin{align*}
&E_{\vartheta_n,c_n}\left(\Phi^{-1}(Y)\right)\\
&=\int_0^1 \Phi^{-1}(u) p_{\vartheta_n,c_n}(u)\,du\\
&=\frac{\abs{c_n}}{P_{\vartheta_n}(\bfX\ge c_n)}\int_0^m z \int_0^1 h_{\vartheta_n}(\abs{c_n}uz) \,du \PinfZdz\\
&=\frac{\abs{c_n}}{P_{\vartheta_n}(\bfX\ge c_n)}\int_0^m z \int_{-\infty}^\infty x h_{\vartheta_n}(\abs{c_n}\Phi(x)z)\varphi(x)\,dx \PinfZdz.
\end{align*}

\ref{eqn:fact_7} implies
\begin{align*}
&\int_0^m z \int_{-\infty}^\infty x h_{\vartheta_n}(\abs{c_n}\Phi(x)z)\varphi(x)\,dx \PinfZdz\\
&= \int_0^m z \int_{-\infty}^\infty x C(\vartheta_n)\exp\left(\vartheta_n T\left(\abs{c_n}\Phi(x)z\right)\right)\varphi(x) \,dx \PinfZdz\\
&= \int_0^m z \int_{-\infty}^\infty x\left(1-\vartheta_n \int_0^1 T(u)\,du + o(\vartheta_n)\right) (1+\vartheta_n C + o(\vartheta_n)) \varphi(x) \,dx\\
&\hspace*{8cm} \PinfZdz\\
&= \int_0^m z \int_{-\infty}^\infty x\left(1+\vartheta_n\left(C- \int_0^1 T(u)\,du\right)+ o(\vartheta_n)\right) \varphi(x)\,dx \PinfZdz\\
&= o(\vartheta_n).
\end{align*}

We have, moreover,
\begin{align*}
&E_{\vartheta_n,c_n}\left(\left(\Phi^{-1}(Y)\right)^2\right)\\
&=\frac{\abs{c_n}}{P_{\vartheta_n}(\bfX\ge c_n)} \int_0^m z \int_{-\infty}^\infty x^2 h_{\vartheta_n}(\abs{c_n}\Phi(x) z) \varphi(x) \,dx\PinfZdz,
\end{align*}
where
\begin{align*}
&\int_0^m z \int_{-\infty}^\infty x^2 h_{\vartheta_n}(\abs{c_n}\Phi(x) z) \varphi(x) \,dx\PinfZdz\\
&=A + \vartheta_n\left(C - \int_0^1 T(u)\,du\right) A + o(\vartheta_n).
\end{align*}

From \ref{eqn:fact_1} and \ref{eqn:fact_6} we obtain
\begin{equation*}
P_{\vartheta_n}(\bfX\ge c_n)= \abs{c_n} A + \left(\frac{\abs{c_n}} n\right)^{1/2} A^{1/2} \xi + o\left(\left(\frac{\abs{c_n}} n\right)^{1/2}\right)
\end{equation*}
and, thus,
\begin{equation*}
E_{\vartheta_n,c_n}\left(\Phi^{-1}(Y)\right) = o(\vartheta_n)
\end{equation*}
and
\begin{align*}
Var_{\vartheta_n,c_n}\left(\Phi^{-1}(Y)\right)&= E_{\vartheta_n,c_n}\left(\left(\Phi^{-1}(Y)\right)^2\right) - E_{\vartheta_n,c_n}\left(\Phi^{-1}(Y)\right)^2\\
&=1+o(\vartheta_n). \qedhere
\end{align*}

\end{proof}

\section*{Acknowledgements}

This article has benefited substantially from discussions during the \emph{4th International
Conference of the ERCIM WG on Computing \& Statistics} (ERCIM'11), 17-19 December,
2011, University of London. The first author is in particular grateful to the organizers of this
conference, who gave him the opportunity of giving a talk about an earlier version of this
paper.

The first author was supported by DFG Grant FA 262/4-1.

\bibliographystyle{enbib_arXiv}
\bibliography{testing_for_a_GPP}

\begin{thebibliography}{13}
\providecommand{\natexlab}[1]{#1}
\providecommand{\url}[1]{\texttt{#1}}
\providecommand{\urlprefix}{URL }
\providecommand{\href}[2]{#2}
\providecommand{\doi}[1]{doi:\discretionary{}{}{}\href{http://dx.doi.org/#1}{#1}}
\providecommand{\selectlanguage}[1]{\relax}
\providecommand{\bibinfo}[2]{#2}
\providecommand{\eprint}[2][]{\href{#1}{#2}}

\bibitem[{Aulbach et~al.(2012{\natexlab{\textit{a}}})Aulbach, Bayer, and
  Falk}]{aulbf11}
\bibinfo{author}{\textsc{Aulbach, S.}}, \bibinfo{author}{\textsc{Bayer, V.}},
  and \bibinfo{author}{\textsc{Falk, M.}}
  (\bibinfo{year}{2012}{\natexlab{\textit{a}}}).
\newblock \bibinfo{title}{A multivariate piecing-together approach with an
  application to operational loss data}.
\newblock \textit{\bibinfo{journal}{Bernoulli}}.
\newblock \bibinfo{note}{To appear}.

\bibitem[{Aulbach et~al.(2012{\natexlab{\textit{b}}})Aulbach, Falk, and
  Hofmann}]{aulfaho11b}
\bibinfo{author}{\textsc{Aulbach, S.}}, \bibinfo{author}{\textsc{Falk, M.}},
  and \bibinfo{author}{\textsc{Hofmann, M.}}
  (\bibinfo{year}{2012}{\natexlab{\textit{b}}}).
\newblock \bibinfo{title}{The multivariate piecing-together approach
  revisited}.
\newblock \bibinfo{type}{Tech. Rep.}, \bibinfo{institution}{University of
  W{\"u}rzburg}.
\newblock \bibinfo{note}{Submitted},
  \eprint[http://arxiv.org/abs/1108.0920]{{\ttfamily arXiv:1108.0920
  [math.PR]}}.

\bibitem[{Aulbach et~al.(2012{\natexlab{\textit{c}}})Aulbach, Falk, and
  Hofmann}]{aulfaho11}
\bibinfo{author}{\textsc{Aulbach, S.}}, \bibinfo{author}{\textsc{Falk, M.}},
  and \bibinfo{author}{\textsc{Hofmann, M.}}
  (\bibinfo{year}{2012}{\natexlab{\textit{c}}}).
\newblock \bibinfo{title}{On max-stable processes and the functional
  {$D$}-norm}.
\newblock \bibinfo{type}{Tech. Rep.}, \bibinfo{institution}{University of
  W{\"u}rzburg}.
\newblock \bibinfo{note}{Submitted},
  \eprint[http://arxiv.org/abs/1107.5136]{{\ttfamily arXiv:1107.5136
  [math.PR]}}.

\bibitem[{Balkema and de~Haan(1974)}]{balh74}
\bibinfo{author}{\textsc{Balkema, A.~A.}}, and
  \bibinfo{author}{\textsc{de~Haan, L.}} (\bibinfo{year}{1974}).
\newblock \bibinfo{title}{Residual life time at great age}.
\newblock \textit{\bibinfo{journal}{Ann. Probab.}}
\newblock \textbf{\bibinfo{volume}{2}}, \bibinfo{pages}{792--804}.

\bibitem[{Buishand et~al.(2008)Buishand, de~Haan, and Zhou}]{buihz08}
\bibinfo{author}{\textsc{Buishand, T.~A.}}, \bibinfo{author}{\textsc{de~Haan,
  L.}}, and \bibinfo{author}{\textsc{Zhou, C.}} (\bibinfo{year}{2008}).
\newblock \bibinfo{title}{On spatial extremes: with application to a rainfall
  problem}.
\newblock \textit{\bibinfo{journal}{Ann. Appl. Stat.}}
\newblock \textbf{\bibinfo{volume}{2}}, \bibinfo{pages}{624--642}.

\bibitem[{Falk(1998)}]{falk98}
\bibinfo{author}{\textsc{Falk, M.}} (\bibinfo{year}{1998}).
\newblock \bibinfo{title}{Local asymptotic normality of truncated empirical
  processes}.
\newblock \textit{\bibinfo{journal}{Ann. Statist.}}
\newblock \textbf{\bibinfo{volume}{26}}, \bibinfo{pages}{692--718}.

\bibitem[{Falk et~al.(2010)Falk, H{\"u}sler, and Reiss}]{fahure10}
\bibinfo{author}{\textsc{Falk, M.}}, \bibinfo{author}{\textsc{H{\"u}sler, J.}},
  and \bibinfo{author}{\textsc{Reiss, R.-D.}} (\bibinfo{year}{2010}).
\newblock \textit{\bibinfo{title}{Laws of Small Numbers: Extremes and Rare
  Events}}.
\newblock \bibinfo{edition}{3rd} ed.
\newblock \bibinfo{publisher}{Birkh{\"a}user}, \bibinfo{address}{Basel}.

\bibitem[{Falk and Liese(1998)}]{fali98}
\bibinfo{author}{\textsc{Falk, M.}}, and \bibinfo{author}{\textsc{Liese, F.}}
  (\bibinfo{year}{1998}).
\newblock \bibinfo{title}{Lan of thinned empirical processes with an
  application to fuzzy set density estimation}.
\newblock \textit{\bibinfo{journal}{Extremes}}
\newblock \textbf{\bibinfo{volume}{1}}, \bibinfo{pages}{323--349}.

\bibitem[{Falk and Michel(2009)}]{falm09}
\bibinfo{author}{\textsc{Falk, M.}}, and \bibinfo{author}{\textsc{Michel, R.}}
  (\bibinfo{year}{2009}).
\newblock \bibinfo{title}{Testing for a multivariate generalized {P}areto
  distribution}.
\newblock \textit{\bibinfo{journal}{Extremes}}
\newblock \textbf{\bibinfo{volume}{12}}, \bibinfo{pages}{33--51}.

\bibitem[{de~Haan and Ferreira(2006)}]{dehaf06}
\bibinfo{author}{\textsc{de~Haan, L.}}, and \bibinfo{author}{\textsc{Ferreira,
  A.}} (\bibinfo{year}{2006}).
\newblock \textit{\bibinfo{title}{Extreme Value Theory: An Introduction}}.
\newblock Springer Series in Operations Research and Financial Engineering.
\newblock \bibinfo{publisher}{Springer}, \bibinfo{address}{New York}.

\bibitem[{Pfanzagl(1994)}]{pfanz94}
\bibinfo{author}{\textsc{Pfanzagl, J.}} (\bibinfo{year}{1994}).
\newblock \textit{\bibinfo{title}{Parametric Statistical Theory}}.
\newblock
\newblock \bibinfo{publisher}{De Gruyter}, \bibinfo{address}{Berlin}.

\bibitem[{Pickands(1975)}]{pick75}
\bibinfo{author}{\textsc{Pickands, J., III}} (\bibinfo{year}{1975}).
\newblock \bibinfo{title}{Statistical inference using extreme order
  statistics}.
\newblock \textit{\bibinfo{journal}{Ann. Statist.}}
\newblock \textbf{\bibinfo{volume}{3}}, \bibinfo{pages}{119--131}.

\bibitem[{Reiss(1993)}]{reiss93}
\bibinfo{author}{\textsc{Reiss, R.-D.}} (\bibinfo{year}{1993}).
\newblock \textit{\bibinfo{title}{A Course on Point Processes}}.
\newblock
\newblock \bibinfo{publisher}{Springer}, \bibinfo{address}{New York}.

\end{thebibliography}

\end{document}